\documentclass[onefignum,onetabnum]{siamart171218}

\usepackage{tikz}
\usetikzlibrary{arrows.meta, calc, positioning, fit, backgrounds, decorations.pathreplacing, intersections, shapes.geometric}
\definecolor{cudBlue}{HTML}{0072B2}  % #0072B2
\definecolor{cudVermillion}{HTML}{D55E00}  % #D55E00
\definecolor{cudOrange}{HTML}{E69F00}  % #E69F00
\definecolor{cudBluishGreen}{HTML}{009E73}  % #009E73
\definecolor{cudSkyBlue}{HTML}{56B4E9}  % #56B4E9
\definecolor{cudGraphPos}{HTML}{3A94B6}  % #3A94B6
\definecolor{cudGraphNeg}{HTML}{B85C74}  % #B85C74
\definecolor{cudReddishPurple}{HTML}{CC79A7}  % #CC79A7
\definecolor{cudYellow}{HTML}{F0E442}  % #F0E442
\definecolor{cudBlack}{HTML}{000000}  % #000000

\definecolor{cudFrontier}{HTML}{000000}  % #000000

\colorlet{cudBlueDraw}{cudBlue!80!black}  % #0072B2!80!black
\colorlet{cudBlueFill}{cudBlue!12}  % #0072B2!12
\colorlet{cudOrangeDraw}{cudOrange!85!black}  % #E69F00!85!black
\colorlet{cudOrangeFill}{cudOrange!14}  % #E69F00!14
\colorlet{cudVermillionDraw}{cudVermillion!80!black}  % #D55E00!80!black
\colorlet{cudVermillionFill}{cudVermillion!10}  % #D55E00!10
\colorlet{cudGreenDraw}{cudBluishGreen!75!black}  % #009E73!75!black
\colorlet{cudGreenFill}{cudBluishGreen!12}  % #009E73!12
\colorlet{cudGraphPosDraw}{cudGraphPos!80!black}  % #3A94B6!80!black
\colorlet{cudGraphNegDraw}{cudGraphNeg!80!black}  % #B85C74!80!black
\colorlet{cudFrontierDraw}{cudBlack}  % #000000
\colorlet{cudDomainFill}{cudSkyBlue!10}  % #56B4E9!10

\tikzset{
  schottky base/.style={
      font=\small,
      inner sep=2pt,
      minimum height=6mm,
      rounded corners=2pt,
    },
  schottky active/.style={
      schottky base,
      draw=cudBlueDraw,
      fill=cudBlueFill,
      thick,
    },
  schottky faded/.style={
      schottky base,
      draw=gray!55,
      fill=gray!12,
      text=gray!70!black,
      thick,
      dashed,
    },
  schottky kept/.style={
      schottky base,
      draw=cudBlueDraw,
      fill=cudBlueFill,
      thick,
    },
  schottky pruned/.style={
      schottky base,
      draw=gray!55,
      fill=gray!12,
      text=gray!65!black,
      thick,
      dashed,
    },
  schottky missed/.style={
      schottky base,
      draw=cudBlueDraw,
      fill=cudOrangeFill,
      thick,
      dashed,
    },
  schottky path/.style={
      schottky base,
      draw=cudVermillionDraw,
      fill=cudVermillionFill,
      thick,
    },
  schottky edge/.style={
      ->,
      >=Stealth,
      thick,
      draw=cudBlue!75!black,
    },
  schottky edge faded/.style={
      ->,
      >=Stealth,
      thick,
      draw=gray!50,
      dashed,
    },
  schottky edge path/.style={
      ->,
      >=Stealth,
      thick,
      draw=cudVermillionDraw,
    },
  schottky path emph/.style={
      schottky path,
      line width=1.35pt,
    },
  schottky edge path emph/.style={
      schottky edge path,
      line width=1.35pt,
    },
  level box/.style={
      draw=cudBlack,
      dashed,
      thick,
      rounded corners=3pt,
      inner sep=5pt,
    },
  level frontier line/.style={
      draw=cudFrontierDraw,
      dashed,
      ultra thick,
    },
  frontier line/.style={
      level frontier line,
    },
  frontier label/.style={
      font=\footnotesize,
      text=cudFrontierDraw,
    },
  potential frontier label/.style={
      font=\small,
      text=cudFrontierDraw,
    },
  schottky failed frontier/.style={
      draw=gray!55,
      dashed,
      thick,
    },
  schottky strike mark/.style={
      font=\footnotesize\bfseries,
      text=cudVermillionDraw,
    },
  schottky ulabel/.style={
      font=\scriptsize,
      fill=white,
      inner sep=1pt,
      draw=cudVermillion!60!black,
      rounded corners=1pt,
    },
  schottky annot/.style={
      font=\footnotesize,
      align=left,
      text=gray!70!black,
    },
  domain fill/.style={fill=cudDomainFill, draw=black, thick},
  hole fill/.style={fill=white, thick},
  graph node/.style={
      circle,
      draw=cudGraphPosDraw,
      fill=cudGraphPos!10,
      thick,
      minimum size=8mm,
      font=\small,
    },
  graph edge/.style={
      ->,
      >=Stealth,
      thick,
      draw=cudGraphPosDraw,
    },
  graph edge path/.style={
      ->,
      >=Stealth,
      thick,
      draw=cudVermillionDraw,
      dashed,
    },
  cud level label/.style={
      potential frontier label,
    },
  cud note kept/.style={
      font=\scriptsize,
      text=cudBlueDraw,
    },
  schottky prune kept/.style={
      draw=cudBlueDraw,
      fill=cudBlue!22,
      line width=0.9pt,
      inner sep=0.5pt,
      minimum size=4mm,
      rounded corners=1pt,
    },
  schottky prune faded/.style={
      draw=gray!55,
      fill=gray!12,
      line width=0.7pt,
      inner sep=0.5pt,
      minimum size=4mm,
      rounded corners=1pt,
      text=gray!65!black,
      dashed,
    },
  schottky prune psi/.style={
      draw=cudBlueDraw,
      fill=cudBlueFill,
      thick,
      inner sep=1.5pt,
      rounded corners=1.5pt,
    },
  schottky prune tri kept/.style={
      draw=cudBlueDraw,
      fill=cudBlue!8,
      thick,
    },
  schottky prune tri faded/.style={
      draw=gray!55,
      fill=gray!10,
      thick,
      dashed,
    },
  schottky prune edge/.style={
      ->,
      thick,
      draw=cudBlue!75!black,
    },
  schottky prune edge faded/.style={
      ->,
      thick,
      draw=gray!50,
      dashed,
    },
}

\makeatletter

\tikzset{
  schottky ellipsis/.style={
    font=\footnotesize,
    text=gray!60!black,
    inner sep=0pt,
    draw=none,
    fill=none,
  },
}

\newcommand{\stree}[3][]{%
  \draw[schottky edge, #1] (#2.west) -- (#3.east);%
}
\newcommand{\streef}[2]{%
  \draw[schottky edge faded] (#1.west) -- (#2.east);%
}

\def\SchottkyTree@x@id{9.8}
\def\SchottkyTree@x@lone{8.2}
\def\SchottkyTree@x@ldots{6.40}% ellipsis before level~2
\def\SchottkyTree@x@ltwo{5.25}% level~2
\def\SchottkyTree@x@lthreedots{3.30}% ellipsis before level~3
\def\SchottkyTree@x@lthree{1.6}% level~3
\def\SchottkyTree@x@lfourdots{-0.7}% ellipsis before level~4
\def\SchottkyTree@x@lfour{-2.05}% level~4

\newcommand{\SchottkyPlaceId}{%
  \node[schottky active] (id) at (\SchottkyTree@x@id, 0) {$\mathrm{Id}$};%
}

\newcommand{\SchottkyPlaceLOne}{%
  \node[schottky active] (t1) at (\SchottkyTree@x@lone, 2.80) {$\theta_1$};%
  \node[schottky active] (t2) at (\SchottkyTree@x@lone, 1.40) {$\theta_2$};%
  \node[schottky active] (tm1) at (\SchottkyTree@x@lone, 0.00) {$\theta_{-1}$};%
  \node[schottky active] (tm2) at (\SchottkyTree@x@lone, -1.40) {$\theta_{-2}$};%
}

\newcommand{\SchottkyPlaceEllipsis}{%
  \node[schottky ellipsis] (dots2) at (\SchottkyTree@x@ldots, 1.40) {$\cdots$};%
  \node[schottky ellipsis] (dotsm1) at (\SchottkyTree@x@ldots, 0.00) {$\cdots$};%
}

\newcommand{\SchottkyPlaceLTwoFromTone}{%
  \node[schottky active] (t1t1) at (\SchottkyTree@x@ltwo, 3.30) {$\theta_1\theta_1$};%
  \node[schottky active] (t2t1) at (\SchottkyTree@x@ltwo, 2.20) {$\theta_2\theta_1$};%
  \node[schottky active] (tm2t1) at (\SchottkyTree@x@ltwo, 1.10) {$\theta_{-2}\theta_1$};%
}

\newcommand{\SchottkyPlaceLTwoFromTmtwo}{%
  \node[schottky active] (t1m2) at (\SchottkyTree@x@ltwo, -0.40) {$\theta_1\theta_{-2}$};%
  \node[schottky active] (tm1m2) at (\SchottkyTree@x@ltwo, -1.50) {$\theta_{-1}\theta_{-2}$};%
  \node[schottky active] (tm2m2) at (\SchottkyTree@x@ltwo, -2.60) {$\theta_{-2}\theta_{-2}$};%
}

\newcommand{\SchottkyPlaceLThreeFromTmTwoTone}{%
  \node[schottky active] (t1m2t1) at (\SchottkyTree@x@lthree, 2.60) {$\theta_1\theta_{-2}\theta_1$};%
  \node[schottky active] (tm1m2t1) at (\SchottkyTree@x@lthree, 1.50) {$\theta_{-1}\theta_{-2}\theta_1$};%
  \node[schottky active] (tm2m2t1) at (\SchottkyTree@x@lthree, 0.40) {$\theta_{-2}\theta_{-2}\theta_1$};%
}

\newcommand{\SchottkyPlaceLThreeFromTmOneTmTwo}{%
  \node[schottky active] (t2tm1m2) at (\SchottkyTree@x@lthree, -0.95) {$\theta_2\theta_{-1}\theta_{-2}$};%
}

\newcommand{\SchottkyPlaceLThreeDots}{%
  \node[schottky ellipsis] (dotsT1T1) at (\SchottkyTree@x@lthreedots, 3.30) {$\cdots$};%
  \node[schottky ellipsis] (dotsT2T1) at (\SchottkyTree@x@lthreedots, 2.20) {$\cdots$};%
  \node[schottky ellipsis] (dotsT1M2) at (\SchottkyTree@x@lthreedots, -0.40) {$\cdots$};%
  \node[schottky ellipsis] (dotsM2M2) at (\SchottkyTree@x@lthreedots, -2.60) {$\cdots$};%
  \node[schottky ellipsis] (dotsTm1M2) at (\SchottkyTree@x@lthreedots, -2.05) {$\cdots$};%
}

\newcommand{\SchottkyPlaceLFourDots}{%
  \node[schottky ellipsis] (dotsL3a) at (\SchottkyTree@x@lfourdots, 2.60) {$\cdots$};%
  \node[schottky ellipsis] (dotsL3b) at (\SchottkyTree@x@lfourdots, 1.50) {$\cdots$};%
  \node[schottky ellipsis] (dotsL4) at (\SchottkyTree@x@lfourdots, -0.35) {$\cdots$};%
  \node[schottky ellipsis] (dotsL3c) at (\SchottkyTree@x@lfourdots, -0.95) {$\cdots$};%
}

\newcommand{\SchottkyPlaceLFour}{%
  \node[schottky active] (tm1m2m2t1) at (\SchottkyTree@x@lfour, 0.40) {$\theta_{-1}\theta_{-2}\theta_{-2}\theta_1$};%
}

\newcommand{\SchottkyPlaceLFourTruncated}{%
  \SchottkyPlaceLFourDots
  \node[schottky pruned] (tm1m2m2t1) at (\SchottkyTree@x@lfour, 0.40) {$\theta_{-1}\theta_{-2}\theta_{-2}\theta_1$};%
}

\newcommand{\SchottkyPlaceLTwo}{%
  \SchottkyPlaceLTwoFromTone
  \SchottkyPlaceLTwoFromTmtwo
}

\newcommand{\SchottkyPlaceLThree}{%
  \SchottkyPlaceLThreeFromTmTwoTone
  \SchottkyPlaceLThreeFromTmOneTmTwo
  \SchottkyPlaceLThreeDots
}

\newcommand{\SchottkyEdgesToLOne}{%
  \stree{id}{t1}%
  \stree{id}{t2}%
  \stree{id}{tm1}%
  \stree{id}{tm2}%
}

\newcommand{\SchottkyEdgesEllipsis}{%
  \stree{t2}{dots2}%
  \stree{tm1}{dotsm1}%
}

\newcommand{\SchottkyEdgesLTwoFromTone}{%
  \stree{t1}{t1t1}%
  \stree{t1}{t2t1}%
  \stree{t1}{tm2t1}%
}

\newcommand{\SchottkyEdgesLTwoFromTmtwo}{%
  \stree{tm2}{t1m2}%
  \stree{tm2}{tm1m2}%
  \stree{tm2}{tm2m2}%
}

\newcommand{\SchottkyEdgesLThreeFromTmTwoTone}{%
  \stree{tm2t1}{t1m2t1}%
  \stree{tm2t1}{tm1m2t1}%
  \stree{tm2t1}{tm2m2t1}%
}

\newcommand{\SchottkyEdgesLThreeFromTmOneTmTwo}{%
  \stree{tm1m2}{t2tm1m2}%
  \stree{tm1m2}{dotsTm1M2}%
}

\newcommand{\SchottkyEdgesLThreeDots}{%
  \stree{t1t1}{dotsT1T1}%
  \stree{t2t1}{dotsT2T1}%
  \stree{t1m2}{dotsT1M2}%
  \stree{tm2m2}{dotsM2M2}%
}

\newcommand{\SchottkyEdgesLFourDots}{%
  \stree{t1m2t1}{dotsL3a}%
  \stree{tm1m2t1}{dotsL3b}%
  \stree{tm2m2t1}{dotsL4}%
  \stree{t2tm1m2}{dotsL3c}%
}

\newcommand{\SchottkyEdgesLFourDotsFaded}{%
  \streef{t1m2t1}{dotsL3a}%
  \streef{tm1m2t1}{dotsL3b}%
  \streef{tm2m2t1}{dotsL4}%
  \streef{t2tm1m2}{dotsL3c}%
}

\newcommand{\SchottkyEdgesLFour}{%
  \stree{tm2m2t1}{tm1m2m2t1}%
}

\newcommand{\SchottkyEdgesLFourFaded}{%
  \streef{tm2m2t1}{tm1m2m2t1}%
}

\newcommand{\SchottkyTreeLmaxThree}{%
  \SchottkyPlaceId
  \SchottkyPlaceLOne
  \SchottkyPlaceEllipsis
  \SchottkyPlaceLTwo
  \SchottkyPlaceLThree
  \SchottkyPlaceLFourTruncated
  \SchottkyEdgesToLOne
  \SchottkyEdgesEllipsis
  \SchottkyEdgesLTwo
  \SchottkyEdgesLThree
  \SchottkyEdgesLFourDotsFaded
  \SchottkyEdgesLFourFaded
}

\newcommand{\SchottkyTreeMTwoLevelPartition}{%
  \SchottkyPlaceId
  \SchottkyPlaceLOne
  \SchottkyPlaceEllipsis
  \SchottkyPlaceLTwo
  \SchottkyPlaceLThree
  \SchottkyPlaceLFourDots
  \SchottkyPlaceLFour
  \SchottkyEdgesToLOne
  \SchottkyEdgesEllipsis
  \SchottkyEdgesLTwo
  \SchottkyEdgesLThree
  \SchottkyEdgesLFourDots
  \SchottkyEdgesLFour
}

\newcommand{\SchottkyEdgesLTwo}{%
  \SchottkyEdgesLTwoFromTone
  \SchottkyEdgesLTwoFromTmtwo
}

\newcommand{\SchottkyEdgesLThree}{%
  \SchottkyEdgesLThreeFromTmTwoTone
  \SchottkyEdgesLThreeFromTmOneTmTwo
  \SchottkyEdgesLThreeDots
}

\newcommand{\SchottkyDrawTreeThetaOnePath}{%
  \node[schottky active] (id) at (\SchottkyTree@x@id, 2.80) {$\mathrm{Id}$};%
  \node[schottky path emph] (t1) at (\SchottkyTree@x@lone, 2.80) {$\theta_1$};%
  \node[schottky active] (t1t1) at (\SchottkyTree@x@ltwo, 3.30) {$\theta_1\theta_1$};%
  \node[schottky active] (t2t1) at (\SchottkyTree@x@ltwo, 2.20) {$\theta_2\theta_1$};%
  \node[schottky path emph] (tm2t1) at (\SchottkyTree@x@ltwo, 1.10) {$\theta_{-2}\theta_1$};%
  \node[schottky active] (t1m2t1) at (\SchottkyTree@x@lthree, 2.60) {$\theta_1\theta_{-2}\theta_1$};%
  \node[schottky path emph] (tm1m2t1) at (\SchottkyTree@x@lthree, 1.50) {$\theta_{-1}\theta_{-2}\theta_1$};%
  \node[schottky active] (tm2m2t1) at (\SchottkyTree@x@lthree, 0.40) {$\theta_{-2}\theta_{-2}\theta_1$};%
  \stree{id}{t1}%
  \stree{t1}{t1t1}%
  \stree{t1}{t2t1}%
  \path (t1.west) edge[schottky edge path emph] node[above, anchor=west, font=\scriptsize, pos=0.62] {$\Delta_{-2}(\theta_1)$} (tm2t1.east);%
  \stree{tm2t1}{t1m2t1}%
  \path (tm2t1.west) edge[schottky edge path emph] node[above, font=\scriptsize, pos=0.48, xshift=4pt, yshift=1pt] {$\Delta_{-1}(\theta_{-2}\theta_1)$} (tm1m2t1.east);%
  \stree{tm2t1}{tm2m2t1}%
}

\newcommand{\Schottky@placePotentialTree}[4]{%
  \SchottkyPlaceId
  \node[schottky kept] (t1) at (\SchottkyTree@x@lone, 2.80) {$\theta_1$};%
  \node[schottky kept] (t2) at (\SchottkyTree@x@lone, 1.40) {$\theta_2$};%
  \node[schottky kept] (tm1) at (\SchottkyTree@x@lone, 0.00) {$\theta_{-1}$};%
  \node[schottky kept] (tm2) at (\SchottkyTree@x@lone, -1.40) {$\theta_{-2}$};%
  \SchottkyPlaceEllipsis
  \node[schottky kept] (t1t1) at (\SchottkyTree@x@ltwo, 3.30) {$\theta_1\theta_1$};%
  \node[schottky pruned] (t2t1) at (\SchottkyTree@x@ltwo, 2.20) {$\theta_2\theta_1$};%
  \node[schottky pruned] (tm2t1) at (\SchottkyTree@x@ltwo, 1.10) {$\theta_{-2}\theta_1$};%
  \node[schottky kept] (t1m2) at (\SchottkyTree@x@ltwo, -0.40) {$\theta_1\theta_{-2}$};%
  \node[schottky pruned] (tm1m2) at (\SchottkyTree@x@ltwo, -1.50) {$\theta_{-1}\theta_{-2}$};%
  \node[schottky pruned] (tm2m2) at (\SchottkyTree@x@ltwo, -2.60) {$\theta_{-2}\theta_{-2}$};%
  \node[#1] (t1m2t1) at (\SchottkyTree@x@lthree, 2.60) {$\theta_1\theta_{-2}\theta_1$};%
  \node[#2] (tm1m2t1) at (\SchottkyTree@x@lthree, 1.50) {$\theta_{-1}\theta_{-2}\theta_1$};%
  \node[schottky pruned] (tm2m2t1) at (\SchottkyTree@x@lthree, 0.40) {$\theta_{-2}\theta_{-2}\theta_1$};%
  \node[#3] (t2tm1m2) at (\SchottkyTree@x@lthree, -0.95) {$\theta_2\theta_{-1}\theta_{-2}$};%
  \SchottkyPlaceLThreeDots
  \SchottkyPlaceLFourDots
  \node[#4] (tm1m2m2t1) at (\SchottkyTree@x@lfour, 0.40) {$\theta_{-1}\theta_{-2}\theta_{-2}\theta_1$};%
}

\newcommand{\SchottkyPlacePotentialExample}{%
  \Schottky@placePotentialTree{schottky kept}{schottky kept}{schottky kept}{schottky kept}%
}

\newcommand{\SchottkyPlacePotentialLevelMissed}{%
  \Schottky@placePotentialTree{schottky kept}{schottky kept}{schottky kept}{schottky missed}%
}

\newcommand{\SchottkyPlacePotentialCurveMissed}{%
  \Schottky@placePotentialTree{schottky missed}{schottky missed}{schottky missed}{schottky missed}%
}

\newcommand{\SchottkyPlacePotentialCurveComplete}{%
  \Schottky@placePotentialTree{schottky kept}{schottky kept}{schottky kept}{schottky kept}%
}

\newcommand{\SchottkyEdgesPotentialExample}{%
  \SchottkyEdgesToLOne
  \SchottkyEdgesEllipsis
  \SchottkyEdgesLTwo
  \SchottkyEdgesLThree
  \SchottkyEdgesLFourDots
  \SchottkyEdgesLFour
}

\edef\SchottkyCoordId{\SchottkyTree@x@id}
\edef\SchottkyCoordLone{\SchottkyTree@x@lone}
\edef\SchottkyCoordLdots{\SchottkyTree@x@ldots}
\edef\SchottkyCoordLtwo{\SchottkyTree@x@ltwo}
\edef\SchottkyCoordLthreeDots{\SchottkyTree@x@lthreedots}
\edef\SchottkyCoordLthree{\SchottkyTree@x@lthree}
\edef\SchottkyCoordLfourDots{\SchottkyTree@x@lfourdots}
\edef\SchottkyCoordLfour{\SchottkyTree@x@lfour}
\edef\SchottkyCoordFrontierLmaxThree{-0.05}

\makeatother

\usepackage{amsfonts}
\usepackage{amsmath}
\usepackage{amssymb}
\usepackage{amsopn}
\usepackage{graphicx}
\usepackage{epstopdf}
\usepackage[caption=false]{subfig}
\crefname{subfigure}{Figure}{Figures}
\Crefname{subfigure}{Figure}{Figures}
\usepackage{algorithm}
\usepackage{algorithmic}
\usepackage{mathtools}
\usepackage{physics2}
\usephysicsmodule{ab}
\usepackage{diffcoeff}
\usepackage{xcolor}
\usepackage{comment}

\definecolor{cudBlue}{HTML}{0072B2}
\newcommand{\tikzlegendkept}{\textcolor{cudBlue!80!black}{blue}}
\newcommand{\tikzlegendpruned}{\textcolor{gray!65!black}{gray}}

\newif\ifshowdraft
\showdraftfalse

\newcommand{\aihint}[1]{}
\excludecomment{aimark}

\newcommand{\mynote}[1]{}

\ifpdf
  \DeclareGraphicsExtensions{.eps,.pdf,.png,.jpg}
\else
  \DeclareGraphicsExtensions{.eps}
\fi

\graphicspath{{fig/}}

\newsiamremark{remark}{Remark}
\newsiamremark{hypothesis}{Hypothesis}
\crefname{hypothesis}{Hypothesis}{Hypotheses}
\newsiamthm{claim}{Claim}

\DeclareMathOperator{\Card}{Card}
\DeclareMathOperator{\level}{level}
\newcommand{\lb}{\mathrm{lb}}
\newcommand{\ub}{\mathrm{ub}}
\newcommand{\fix}{\mathrm{fix}}
\newcommand{\Real}[1]{\mathrm{Re}[#1]}
\newcommand{\Id}{\mathrm{Id}}

\newcommand{\Complex}{\mathbb{C}}
\newcommand{\Reals}{\mathbb{R}}
\newcommand{\UpperHalf}{\mathbb{H}}
\newcommand{\Domain}{D_\zeta}

\newcommand{\iunit}{\mathrm{i}}
\newcommand{\conj}[1]{\overline{#1}}
\newcommand{\abs}[1]{\ab|#1|}
\newcommand{\closure}[1]{\overline{#1}}

\newcommand{\defeq}{\coloneqq}

\headers{Computing SK-Prime Products}%
{S. Yamamoto and H. Miyoshi}

\title{Fast computation and convergence analysis of the infinite-product representation of the Schottky--Klein prime function%
  \thanks{% Submitted to the editors \draftinline{DATE}.
    {\bfseries Funding:} This work was supported by a JSPS Postdoctoral Fellowship (Grant Number JP24KJ0041) and JSPS KAKENHI Grant Number JP26K17030.}}

\author{Shuntaro Yamamoto\thanks{%
    Department of Mathematical Informatics, Graduate School of Information Science and Technology, The University of Tokyo, Tokyo, Japan (\email{shun0923@g.ecc.u-tokyo.ac.jp}, \email{hiroyukimiyoshi@g.ecc.u-tokyo.ac.jp}).}
  \and Hiroyuki Miyoshi\footnotemark[2]}

\makeatletter
\newcommand*{\addFileDependency}[1]{%
  \typeout{(#1)}%
  \@addtofilelist{#1}%
  \IfFileExists{#1}{}{\typeout{No file #1.}}%
}

\newcommand*{\myexternaldocument}[1]{%
  \IfFileExists{xr-#1.aux}{%
    \externaldocument[{}][#1.pdf]{xr-#1}%
    \addFileDependency{xr-#1.aux}%
  }{%
    \IfFileExists{#1.aux}{%
      \externaldocument{#1}%
      \addFileDependency{#1.aux}%
    }{%
      \externaldocument{#1}%
      \addFileDependency{#1.aux}%
    }%
  }%
  \addFileDependency{#1.tex}%
}

\def\Hy@arxiv@register@linkcolors{%
  \AtBeginDocument{\hypersetup{urlcolor=siaminlinkcolor}}%
  \pretocmd{\bibliography}{%
    \hypersetup{urlcolor=siamexlinkcolor}%
  }{}{}%
}
\IfFileExists{xr-main.aux}{%
  \Hy@arxiv@register@linkcolors
}{%
  \IfFileExists{xr-supplement.aux}{%
    \Hy@arxiv@register@linkcolors
  }{}%
}
\makeatother
\usepackage{booktabs}

\ifpdf
\hypersetup{
  pdftitle={Fast computation and convergence analysis of the infinite-product representation of the Schottky--Klein prime function},
  pdfauthor={S. Yamamoto and H. Miyoshi}
}
\fi

\myexternaldocument{supplement}

\begin{document}

\maketitle

% -------------------- Abstract --------------------
\begin{abstract}
  The Schottky--Klein prime function is a standard tool for boundary-value problems on multiply connected circular domains.
  Because this function is represented as an infinite product over a Schottky group, numerical evaluation requires truncation to finitely many factors.
  The standard word-length truncation grows exponentially in cost and becomes inefficient when the boundary circles nearly touch one another or the unit circle.
  To address this difficulty, we assign to each group element a cross-ratio \emph{potential} measuring the size of its contribution, and retain only terms below a prescribed threshold.
  We establish uniform closed-form bounds on the change in this potential when prepending Schottky-group generators, and from these bounds we derive an efficient enumeration algorithm.
  The resulting relative error decays exponentially with the threshold at a rate determined by the Hausdorff dimension of the limit set of the Schottky group.
  Numerical experiments demonstrate that the proposed formulation achieves substantial computational speedups over word-length truncation in challenging geometric configurations.
\end{abstract}

% -------------------- Keywords / AMS --------------------
\begin{keywords}
  Schottky--Klein prime function, multiply connected domains,
  Schottky group, numerical evaluation, hyperbolic geometry, Hausdorff dimension
\end{keywords}

\begin{AMS}
  30F40, 65E05, 30F45
\end{AMS}

% ============================================================
\section{Introduction}
\label{sec:intro}

\subsection{Multiply connected domains and the Schottky--Klein prime function}
Many boundary-value problems in fluid mechanics, electrostatics, and heat conduction are naturally posed on two-dimensional \emph{multiply connected} domains, that is, domains with finitely many disjoint holes.
Examples include the motion of point vortices around several circular islands~\cite{crowdy2005Motion}, steadily rotating arrays of vortex patches~\cite{crowdy2005Analytical}, and source/sink flows in circular domains~\cite{crowdy2013Analytical}.
In these problems, the analytical solution can be expressed in terms of the \emph{Schottky--Klein prime function} (S--K prime function)~$\omega(\zeta, \alpha)$, a special function introduced by Schottky~\cite{schottky1887Ueber} and studied in the classical theory of Schottky groups and Abelian functions~\cite{klein1890Zur,baker1897Abels}.
More recently, Crowdy and collaborators developed this function into a computational tool for applications in multiply connected domains~\cite{crowdy2007Computing,crowdy2020Solving}.
The S--K prime function provides a common building block for these problems: once $\omega$ has been evaluated numerically on circular domains, the analytical solutions built from it become numerically accessible.
Recent applications include channel and fin-array flows~\cite{miyoshi2022Longitudinal,miyoshi2024Fully}, doubly periodic vortex dynamics~\cite{krishnamurthy2023Nvortex}, nematic alignment on multiply connected domains~\cite{miyoshi2025Analytical}, and harmonic-measure distribution functions for slit domains~\cite{green2022Harmonicmeasure}.
Extensions of the Schwarz--Christoffel theorem to multiply connected and periodic domains remain active topics of research building on this function theory~\cite{crowdy2005Schwarz,baddoo2019Periodic}.

\subsection{Infinite-product representation and level truncation}
\label{sec:intro-limitations}
On multiply connected circular domains with $M \ge 1$ holes, the S--K prime function is represented as the infinite product
\begin{equation}
  \omega(\zeta, \alpha)
  = (\zeta - \alpha)\prod_{\theta \in \Theta''}
  R_{\zeta, \alpha}(\theta),
  \qquad
  R_{\zeta, \alpha}(\theta) \defeq
  \frac{\ab(\theta(\zeta) - \alpha)\ab(\theta(\alpha) - \zeta)}
  {\ab(\theta(\zeta) - \zeta)\ab(\theta(\alpha) - \alpha)},
  \label{eq:sk-prime-intro}
\end{equation}
over the set $\Theta''$ of all non-identity elements of the Schottky group~$\Theta$, with inverses omitted; see \cref{sec:skprime} for the precise definition.

It is not known whether the infinite product in \eqref{eq:sk-prime-intro} converges for every multiply connected circular domain~\cite{baker1897Abels,burnside1890Functions,crowdy2007Computing,belokolos1994Algebrogeometric}.
Classical treatments impose geometric restrictions on the Schottky group, such as \emph{circle-decomposable} domains~\cite{bobenko2011Introduction,belokolos1994Algebrogeometric}, and a complete convergence theory for general configurations remains open~\cite{crowdy2008Geometric}.
When the holes are well separated from one another and from the unit circle, the product is expected to converge in numerical applications~\cite{crowdy2016Schottky,crowdy2020Solving}, but this remains a heuristic observation without a quantitative rate.

In numerical computations, only finitely many factors can be evaluated, so the product~\eqref{eq:sk-prime-intro} must be truncated.
The classical \emph{level truncation} retains all group elements up to a prescribed word length~$L_{\max}$.
The resulting partial product $\omega_{L_{\max}} \defeq(\zeta - \alpha)\prod_{\theta \in \Theta''_{\le L_{\max}}} R_{\zeta, \alpha}(\theta)$ is the standard tool implemented in essentially all product-based S--K prime codes.

There are, however, two well-known difficulties with level truncation.
First, the number of level-$\ell$ elements grows as $M(2M - 1)^{\ell - 1}$, so the cost of computing $\omega_{L_{\max}}$ is exponential in~$L_{\max}$.
Second, even when the full product converges, level truncation converges very slowly in challenging geometries.
Words such as $\theta_1^n$ when a hole lies close to the unit circle or $(\theta_1\theta_{-2})^n$ when two holes are closely packed may have a much smaller per-term decay than a much longer word built from well-separated generators.
Therefore, level ordering does not correlate well with the actual term sizes.

\subsection{Prior work beyond level truncation}
\label{sec:intro-bvp}
A complementary computational route evaluates $\omega$ without truncating the product~\eqref{eq:sk-prime-intro}.
Crowdy and Marshall~\cite{crowdy2007Computing} developed a Fourier--Laurent scheme for evaluating $\omega$ on circular domains via the Schottky double.
A decade later, Crowdy, Kropf, Green, and Nasser~\cite{crowdy2016Schottky} presented boundary-value schemes for evaluating the S--K prime function on circular domains, with a public MATLAB implementation in the SKPrime package~\cite{kropf2016SKPrime}.
This boundary-value approach is often efficient in practice, but its truncation parameter resides on the Fourier side of the formulation.
The resulting pointwise error in~$\omega$ is therefore not directly tied to the Schottky-group product~\eqref{eq:sk-prime-intro}, and \emph{a priori} error bounds are correspondingly harder to derive.
We therefore retain the infinite-product representation and seek truncations for which the omitted factors admit explicit asymptotic error analysis.

Within the product representation, Pollicott~\cite{pollicott2021Schottky} analyzed~\eqref{eq:sk-prime-intro} using a hyperbolic geometry, and obtained a counting theorem for factors of a given size and an asymptotic relating term size to hyperbolic distance.
For fixed $\zeta, \alpha$ in the domain, his counting theorem (\cref{thm:pollicott}) asserts that
$\Card \ab\{\theta \in \Theta : \abs{R_{\zeta, \alpha}(\theta) - 1} \ge 1/T\} \sim C T^{\delta}$ as $T \to \infty$, where $\delta$ is the Hausdorff dimension of the limit set of~$\Theta$.
Together with the asymptotic $\abs{R_{\zeta, \alpha}(\theta) - 1} \sim 4 e^{-d(\theta)}$, this identifies distance-based ordering, rather than word length, as the natural truncation parameter for the S--K prime product.
In particular, when the factors are ordered by decreasing $\abs{R_{\zeta, \alpha}(\theta) - 1}$, the product converges whenever $\delta < 1$~\cite{pollicott2021Schottky}.

\subsection{Contributions}
\label{sec:contrib}
Building on this distance-based ordering, we develop a potential truncation of the infinite-product representation together with an efficient enumeration algorithm.
We introduce a potential that ranks product factors by size, and we truncate the product to those factors whose potential is at most a cutoff~$U_{\max}$.
Unlike level truncation, whether a factor is kept depends on~$(\zeta, \alpha)$ and the hole configuration rather than on word length, so the truncation typically cuts across levels of the tree of the Schottky group.
We prove a \emph{uniform increment bound} on the change in potential upon prepending a generator (\cref{thm:increment}), which provides the lower bounds used as the pruning criterion when a depth-first search enumerates the surviving factors.
We also show that the relative error decays exponentially in~$U_{\max}$ at rate~$1 - \delta$ whenever $\delta < 1$ (\cref{thm:error}), where $\delta$ is the Hausdorff dimension of the limit set of~$\Theta$, and that the same counting estimate controls the complexity of the enumeration (\cref{prop:alg}).

Numerical experiments on three representative configurations show that potential truncation outperforms level truncation, most clearly when the holes are closely packed, and that the advantage persists globally when~$\zeta$ varies over~$\Domain$ at fixed~$\alpha$.
We also compare with SKPrime~\cite{kropf2016SKPrime,crowdy2016Schottky} as an external baseline.

\Cref{sec:prelim} reviews the Schottky group, the S--K prime function, and Pollicott's counting theorem.
The potential truncation and enumeration algorithm are developed in \cref{sec:potential,sec:algorithm,sec:increment,sec:error}, along with increment and error analyses.
Numerical experiments appear in \cref{sec:experiments}; the detailed proofs, the closed-form edge-weight formulas, and the Bellman--Ford algorithm for computing the minimum walk weight are presented in the supplementary material.

% ============================================================
\section{Preliminaries}
\label{sec:prelim}

\subsection{Multiply connected domains and the Schottky group}
\label{sec:schottky}

We follow the standard circular-domain formulation of Crowdy~\cite{crowdy2020Solving}.
Fix an integer $M \ge 1$.
The \emph{unit disk} is
\begin{equation}
  D_0 = \{\zeta \in \Complex : \abs{\zeta} < 1\},
  \qquad
  C_0 = \partial D_0 = \{\zeta \in \Complex : \abs{\zeta} = 1\},
  \label{eq:D0}
\end{equation}
where the boundary $C_0$ is the unit circle.
For each $m = 1, \ldots, M$, let $D_m$ denote the open disk
\begin{equation}
  D_m = \{\zeta \in \Complex : \abs{\zeta - \delta_m} < q_m\},
  \qquad
  C_m = \partial D_m,
  \label{eq:Dm}
\end{equation}
with center $\delta_m \in \Complex$ and radius $q_m > 0$.
We assume that the holes are pairwise disjoint and lie in the interior of the unit disk,
$\closure{D_i} \cap \closure{D_j} = \emptyset$ for all $i\neq j$,
and $\closure{D_i}\subset D_0$ for $i = 1, \ldots, M$.
The \emph{multiply connected domain} is then
\begin{equation}
  \Domain
  = D_0 \setminus \bigcup_{m = 1}^{M}\closure{D_m},
  \label{eq:domain-D}
\end{equation}
where $\Domain$ is the unit disk with $M$ circular holes removed.

Reflection in the unit circle $C_0$ is $\zeta \mapsto 1/\conj{\zeta}$, and reflection in $C_m$ is $\zeta \mapsto \delta_m + q_m^2/(\conj{\zeta} - \conj{\delta_m})$.
Composing these two reflections defines the \emph{basic M\"obius transformations}
\begin{equation}
  \theta_m(\zeta)
  = \delta_m + \frac{q_m^2\zeta}{1 - \conj{\delta_m}\zeta},
  \qquad m = 1, \ldots, M.
  \label{eq:theta-m}
\end{equation}
The inverses are written $\theta_{-m} = \theta_m^{-1}$ and are given by
\begin{equation}
  \theta_{-m}(\zeta)
  = \frac{\zeta - \delta_m}{q_m^2 - \abs{\delta_m}^2 + \conj{\delta_m}\zeta},
  \qquad m = 1, \ldots, M.
  \label{eq:theta-minus-m}
\end{equation}
For each $m = 1, \ldots, M$, the circles $C_m$ and $C_{-m}$ are related by
\begin{equation}
  C_{-m} = \theta_{-m}(C_m),
  \qquad
  C_m = \theta_m(C_{-m}).
  \label{eq:C-minus}
\end{equation}
Let $\delta_{-m}$ and $q_{-m}$ denote the center and radius of~$C_{-m}$:
\begin{equation}
  \delta_{-m}
  = \frac{\delta_m}{\abs{\delta_m}^2 - q_m^2},
  \qquad
  q_{-m}
  = \frac{q_m}{\abs{\abs{\delta_m}^2 - q_m^2}}.
  \label{eq:C-minus-params}
\end{equation}
Define
\begin{equation}
  D_{-m}
  \defeq
  \theta_{-m}\ab(\Complex \setminus \closure{D_m}),
  \label{eq:D-minus}
\end{equation}
so that $C_{-m} = \partial D_{-m}$.
Then, $\theta_m$ maps $D_{-m}$ onto $\Complex \setminus \closure{D_m}$ and $\Complex \setminus \closure{D_{-m}}$ onto $D_m$, whereas $\theta_{-m}$ maps $D_m$ onto $\Complex \setminus \closure{D_{-m}}$ and $\Complex \setminus \closure{D_m}$ onto $D_{-m}$.
For $j \in \{\pm 1, \ldots, \pm M\}$, we write $D_{-j}$ and $C_{-j}$ with the convention $D_{-(-m)} = D_m$ and $C_{-(-m)} = C_m$.

\begin{definition}[Schottky group~\cite{crowdy2020Solving}]
  \label{def:schottky}
  The \emph{Schottky group} $\Theta$ is the free group generated by the basic M\"obius transformations $\theta_1, \ldots, \theta_M$:
  \begin{equation}
    \begin{aligned}
      \Theta
       & \defeq
      \ab<\theta_1, \ldots, \theta_M>
      \\
       & =
      \{\Id\}
      \cup
      \ab\{
      \theta_{j_\ell} \cdots \theta_{j_1}
      :
      \ell \ge 1,\
      j_i \in \{\pm 1, \ldots, \pm M\},\,
      j_{i + 1} \neq -j_i
      \ (1 \le i < \ell)
      \}
      \\
       & =
      \ab\{
      \Id,\,
      \theta_1,\, \theta_2,\, \ldots,\, \theta_{-1},\, \ldots,\,
      \theta_1\theta_2,\, \ldots,\,
      \theta_1\theta_{-2}\theta_3,\, \ldots
      \},
    \end{aligned}
    \label{eq:schottky-group}
  \end{equation}
  where $\theta_{-m} = \theta_m^{-1}$.
  Every non-identity element admits a unique factorization of this form, written $\theta = \theta_{j_\ell} \cdots \theta_{j_1}$ and called a \emph{reduced word}; we call $\ell$ the \emph{level} (or word length) of~$\theta$, written $\level(\theta) = \ell$.
  The identity is the only element of level~$0$.
  In the word $\theta_{j_\ell} \cdots \theta_{j_1}$, the rightmost factor acts first on~$z$: the map is $\theta_{j_\ell}\circ \cdots \circ\theta_{j_1}$, with $(\theta_{j_\ell} \cdots \theta_{j_1})(z) = \theta_{j_\ell}( \cdots(\theta_{j_1}(z)) \cdots)$.
  We write $\theta\psi$ for the composition $\theta\circ\psi$.
\end{definition}
\begin{definition}[Quotient set $\Theta''$]
  \label{def:Theta-double-prime}
  The set $\Theta''$ comprises all elements of the Schottky group $\Theta$ except the identity and all inverses: if the term associated with $\theta \in \Theta$ appears in the infinite product~\eqref{eq:sk-prime-intro}, then the term for $\theta^{-1}$ is omitted.
  Equivalently,
  \begin{equation}
    \Theta''
    \defeq
    (\Theta \setminus \{\Id\}) \big/ {\sim},
    \qquad
    \theta \sim \theta^{-1}.
    \label{eq:Theta-double-prime}
  \end{equation}
\end{definition}

The Schottky group $\Theta$ carries a rooted tree of reduced words with the identity at the root (\Cref{fig:level-tree}).
Level~$\ell$ comprises the vertices at distance~$\ell$ from the root, i.e., the reduced words of length~$\ell$.
Define
\begin{equation}
  \Theta''_\ell
  \defeq
  \{\theta \in \Theta'' : \level(\theta) = \ell\}.
  \label{eq:Theta-ell}
\end{equation}
Each vertex of the tree corresponds one-to-one with a reduced word~\eqref{eq:schottky-group}.
Following the index sequence $(j_1, \ldots, j_\ell)$ from the root produces the successive compositions $\psi_i = \theta_{j_i} \cdots \theta_{j_1}$, and the vertex reached at level~$\ell$ is precisely the reduced word $\psi_\ell$.
If $\psi = \theta_{j_\ell} \cdots \theta_{j_1} \in \Theta \setminus \{\Id\}$, its children are $\theta_k\psi$ for the $2M - 1$ indices $k$ with $k\neq -j_\ell$.
For $\ell \ge 1$, $\Card \ab\{\Theta''_\ell\} = M(2M - 1)^{\ell - 1}$, so $\Card \ab\{\Theta''_1\} = M$ and $\Card \ab\{\Theta''_2\} = M(2M - 1)$.
\subsection{The Schottky--Klein prime function}
\label{sec:skprime}

For any two points $\zeta, \alpha \in \Domain$, the \emph{Schottky--Klein prime function} is given~\cite{schottky1887Ueber,crowdy2020Solving} by the infinite product
\begin{equation}
  \omega(\zeta, \alpha)
  \defeq(\zeta - \alpha)
  \prod_{\theta \in \Theta''} R_{\zeta, \alpha}(\theta),
  \qquad
  R_{\zeta, \alpha}(\theta)
  \defeq
  \frac{\ab(\theta(\zeta) - \alpha)\ab(\theta(\alpha) - \zeta)}
  {\ab(\theta(\zeta) - \zeta)\ab(\theta(\alpha) - \alpha)},
  \label{eq:sk-prime}
\end{equation}
where the product runs over one representative of each class in $\Theta''$.
The factor $R_{\zeta, \alpha}(\theta)$ is the inverse of a cross-ratio:
\begin{equation}
  R_{\zeta, \alpha}(\theta)
  =
  \frac{1}{R\ab(\zeta, \alpha, \theta(\zeta), \theta(\alpha))},
  \qquad
  R(a, b, c, d) \defeq
  \frac{(c - a)(d - b)}{(c - b)(d - a)}.
  \label{eq:R-cross-ratio}
\end{equation}

The classical \emph{level truncation} retains all group elements up to a prescribed word length~$L_{\max}$.
To evaluate~$\omega$ numerically, we replace the infinite product~\eqref{eq:sk-prime} by the finite product over those words.
Define
\begin{equation}
  \Theta''_{\le L_{\max}}
  \defeq
  \bigcup_{\ell = 1}^{L_{\max}}\Theta''_\ell
  =
  \{\theta \in \Theta'' : \level(\theta) \le L_{\max}\},
  \label{eq:Theta-leqL}
\end{equation}
and the partial product
\begin{equation}
  \omega_{L_{\max}}(\zeta, \alpha)
  \defeq(\zeta - \alpha)
  \prod_{\theta \in \Theta''_{\le L_{\max}}}
  R_{\zeta, \alpha}(\theta).
  \label{eq:level-trunc}
\end{equation}
\Cref{fig:level-tree} illustrates the tree of the Schottky group for $M = 2$: the partition of $\Theta$ by level and the elements included in $\Theta''_{\le L_{\max}}$ for a fixed $L_{\max}$.
Because $\Card \ab\{\Theta''_\ell\} = M(2M - 1)^{\ell - 1}$, computing $\omega_{L_{\max}}$ requires exponentially many factors as $L_{\max}$ grows.

\begin{figure}[t]
  \centering
  \resizebox{0.72\textwidth}{!}{\begin{tikzpicture}[x=0.82cm, y=0.88cm, font=\small]
  \SchottkyTreeMTwoLevelPartition

  \node[level box, fit=(t1)(t2)(tm1)(tm2), label={[cud level label]above:level $1$}] {};
  \node[level box, fit=(t1t1)(t2t1)(tm2t1)(t1m2)(tm1m2)(tm2m2)(dots2)(dotsm1), label={[cud level label]above:level $2$}] {};
  \node[level box, fit=(t1m2t1)(tm1m2t1)(tm2m2t1)(t2tm1m2)(dotsT1T1)(dotsT2T1)(dotsT1M2)(dotsM2M2)(dotsTm1M2), label={[cud level label]above:level $3$}] {};
  \node[level box, fit=(tm1m2m2t1)(dotsL3a)(dotsL3b)(dotsL4)(dotsL3c), label={[cud level label]above:level $4$}] {};
\end{tikzpicture}}%

  \caption{Tree of reduced words in the Schottky group for $M = 2$ holes.
    Children of a vertex $\psi = \theta_{j_\ell} \cdots \theta_{j_1}$ are $\theta_k\psi$ with $k \neq -j_\ell$ (left composition).
    Level~$\ell$ comprises all reduced words of length~$\ell$; level truncation at~$L_{\max}$ retains every element in $\Theta''_{\le L_{\max}} = \bigcup_{\ell = 1}^{L_{\max}}\Theta''_\ell$.}
  \label{fig:level-tree}
\end{figure}

\subsection{Pollicott's counting theorem}
\label{sec:pollicott}

We quote the following counting theorem of Pollicott~\cite{pollicott2021Schottky}.
The \emph{Poincar\'e upper half-space} is
\begin{equation}
  \UpperHalf^3
  = \{(z, t) \in \Complex \times \Reals : t > 0\},
  \qquad
  \dl{s}^2 = \frac{ \dl{x}^2 + \dl{y}^2 + \dl{t}^2}{t^2},
  \label{eq:H3}
\end{equation}
where $z = x + \iunit y$.
Each M\"obius transformation on~$\Complex$ extends to an orientation-preserving isometry of $(\UpperHalf^3, \dl{s}^2)$; in particular, every $\theta \in \Theta$ acts on~$\UpperHalf^3$.
A \emph{geodesic} in $\UpperHalf^3$ with endpoints $u, u' \in \Complex \cup \{\infty\}$ is denoted $[u, u']$.
For $\theta \in \Theta$ and such endpoints, $\theta$ sends $[u, u']$ to the geodesic with endpoints $\theta(u)$ and $\theta(u')$; we write
\begin{equation}
  \theta(\gamma) \defeq [\theta(u), \theta(u')],
  \qquad \gamma=[u, u'].
  \label{eq:theta-on-geodesic}
\end{equation}
The \emph{hyperbolic distance} between two geodesics is written $d(\gamma_1, \gamma_2)$.
For fixed $\zeta, \alpha \in \Domain$, set $\gamma \defeq[\zeta, \alpha]$ and define, for $\theta \in \Theta''$,
\begin{equation}
  d(\theta) \defeq d\ab(\gamma, \theta(\gamma)).
  \label{eq:d-theta}
\end{equation}
Further background on the hyperbolic distance in $\UpperHalf^3$ is given in~\cite{pollicott2021Schottky,marden2016Hyperbolic,fenchel1989Elementary}.

\begin{theorem}[Pollicott~\cite{pollicott2021Schottky}]
  \label{thm:pollicott}
  For fixed $\zeta, \alpha \in \Domain$, there exist a constant $C > 0$ and an exponent $\delta \in (0, 2)$, depending only on~$\Theta$, such that
  \begin{equation}
    \Card \ab\{\theta \in \Theta :
    \abs{R_{\zeta, \alpha}(\theta) - 1} \ge 1/T\}
    \sim CT^{\delta}
    \qquad \text{as }T \to \infty.
    \label{eq:pollicott-count}
  \end{equation}
  The exponent $\delta$ coincides with the Hausdorff dimension of the limit set of $\Theta$.
\end{theorem}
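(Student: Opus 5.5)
The plan is to split the statement into a geometric part, which converts the size of $\abs{R_{\zeta, \alpha}(\theta) - 1}$ into the hyperbolic distance $d(\theta)$ of \eqref{eq:d-theta}, and a dynamical part, which counts orbit elements by distance using symbolic dynamics and renewal theory.

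\emph{Step 1: from cross-ratio to distance.} Since $\zeta, \alpha \in \Domain$, the geodesics $\gamma = [\zeta, \alpha]$ and $\theta(\gamma)$ have distinct endpoints for every $\theta \neq \Id$, and their endpoints separate as $\theta$ grows. Normalize by a M\"obius map sending $\zeta \mapsto 0$ and $\alpha \mapsto \infty$, so that $\gamma$ becomes the vertical axis. The cross-ratio in \eqref{eq:R-cross-ratio} is invariant under this normalization. It is then an explicit function of the complex distance $\lambda = d + \iunit\phi$ between the two geodesics, namely $R = \cosh^{2}(\lambda/2)$ or $\tanh^{2}(\lambda/2)$ up to convention. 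Expanding for large $d$ gives
\[
  \abs{R_{\zeta, \alpha}(\theta) - 1} = 4e^{-d(\theta)}\bigl(1 + O(e^{-d(\theta)})\bigr).
\]
The error term is uniform because $d(\theta) \to \infty$ along $\Theta$, by discreteness. Hence the set in \eqref{eq:pollicott-count} agrees, up to a $(1 + o(1))$ rescaling of the threshold, with the set $\{\theta : d(\theta) \le \log(4T)\}$. It therefore suffices to prove
\[
  N(L) \defeq \Card\{\theta \in \Theta : d(\theta) \le L\} \sim C' e^{\delta L}.
\]

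\emph{Step 2: symbolic coding.} Because the group is Schottky, reduced words form a subshift of finite type on the alphabet $\{\pm 1, \ldots, \pm M\}$, with the single forbidden transition $j \to -j$. The limit set $\Lambda$ is coded by one-sided infinite reduced words. For $\theta = \theta_{j_\ell} \cdots \theta_{j_1}$, both endpoints of $\theta(\gamma)$ lie in the disk $D_{j_\ell}$ nested along the word. The distance $d(\theta)$ then equals a Birkhoff sum $\sum_i f(\sigma^i x)$ of the H\"older function $f(x) = -\log\abs{\theta'_{x_1}(\pi(\sigma x))}$, up to a bounded H\"older correction depending on the end letters. I would establish this via the usual derivative/distance comparison for nested Schottky disks with uniformly contracting branches.

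\emph{Step 3: renewal and transfer operators.} I introduce the complex Ruelle operator $\mathcal{L}_s$ with weight $e^{-s f}$ and the Poincar\'e-type series $\eta(s) = \sum_{\theta} e^{-s\,d(\theta)}$. Expressing $\eta$ through $(I - \mathcal{L}_s)^{-1}$ shows that:
\begin{itemize}
  \item $\eta$ converges for $\mathrm{Re}\, s > \delta$, where $\delta$ is the unique zero of the pressure $P(-s f)$;
  \item by Bowen's formula this $\delta$ is the Hausdorff dimension of $\Lambda$, and $0 < \delta < 2$;
  \item $\eta$ has a simple pole at $s = \delta$ and extends analytically to a neighbourhood of every other point of the line $\mathrm{Re}\, s = \delta$.
\end{itemize}
A Wiener--Ikehara Tauberian theorem, or equivalently Lalley's renewal theorem, then yields $N(L) \sim C' e^{\delta L}$. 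Combining this with Step 1 gives the claim with $C = 4^{\delta} C'$.

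\emph{Main obstacle.} The hard step is excluding other poles on $\mathrm{Re}\, s = \delta$. This requires the non-lattice property that $f$ is not cohomologous to a function with values in a discrete subgroup $a\mathbb{Z}$. I would prove it by exhibiting two periodic orbits, i.e.\ loxodromic elements of $\Theta$, whose translation lengths have irrational ratio. Such elements exist because $\Theta$ is a non-elementary discrete group of M\"obius maps, whose length spectrum is not contained in a lattice. The $M = 1$ case is elementary (cyclic group), and the counting is then checked directly.
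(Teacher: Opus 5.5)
The paper gives no proof of this statement: it quotes it as Theorem~1.3 of Pollicott and uses it as a black box. Your outline is the standard thermodynamic-formalism route for orbit counting in Schottky groups: coding by reduced words, complex Ruelle operators, non-arithmeticity of the length spectrum, and a Tauberian theorem. For $M \ge 2$ its architecture is sound. Step~1 is even exact: with complex distance $d(\theta) + \iunit\phi$ (twist suitably oriented), one has $\abs{R_{\zeta,\alpha}(\theta) - 1} = 2/(\cosh d(\theta) - \cos\phi)$.

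The concrete error is your last sentence. The case $M = 1$ cannot be ``checked directly'', because the statement is false there. Then $\Theta = \langle\theta_1\rangle$ is cyclic, and its limit set is the pair of fixed points of $\theta_1$, of Hausdorff dimension $0 \notin (0,2)$. Moreover $d(\theta_1^n) = \abs{n}\ell + O(1)$, with $\ell$ the translation length, so the count in \eqref{eq:pollicott-count} grows like $(2/\ell)\log T$ rather than $CT^{\delta}$. The theorem requires $\Theta$ to be non-elementary, i.e.\ $M \ge 2$. That is exactly what your non-lattice argument and the claim $\delta > 0$ use, so impose it as a hypothesis instead.

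A second, more technical gap lies in passing from Step~2 to Step~3. A merely bounded correction $d(\theta) - S_\ell f$ only gives $N(L) \asymp e^{\delta L}$. To write $\eta$ as a resolvent expression plus a function analytic across $\mathrm{Re}\,s = \delta$, the correction must be a fixed H\"older function up to $O(\kappa^{\ell})$ with $\kappa < 1$.

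The correction also does not depend on the head alone. Besides $j_\ell, j_{\ell-1}, \ldots$ (the position of $\theta(\gamma)$), it depends on the tail $j_1, j_2, \ldots$ through $\tfrac12\log\abs{\theta'(\zeta)/\theta'(\alpha)} = \log\abs{(\alpha - p_\theta)/(\zeta - p_\theta)}$, where $p_\theta = \theta^{-1}(\infty)$. That dependence is not H\"older on the one-sided shift along which $S_\ell f$ is taken, so $(I - \mathcal{L}_s)^{-1}$ as you set it up does not capture it.

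A clean repair is to work with $U_{\zeta,\alpha}$ directly. Combine $\abs{R_{\zeta,\alpha}(\theta) - 1} = \abs{\zeta-\alpha}\abs{\theta(\zeta)-\theta(\alpha)}/(\abs{\theta(\zeta)-\zeta}\abs{\theta(\alpha)-\alpha})$ with $(\theta(u)-\theta(v))^2 = \theta'(u)\theta'(v)(u-v)^2$. This gives exactly
\[ U_{\zeta,\alpha}(\theta) = -\tfrac12\log\abs{\theta'(\zeta)} - \tfrac12\log\abs{\theta'(\alpha)} + \log\abs{\theta(\zeta)-\zeta} + \log\abs{\theta(\alpha)-\alpha} - 2\log\abs{\zeta-\alpha}. \]
First apply a M\"obius conjugation putting $\infty$ outside every $\closure{D_{\pm m}}$; this leaves $U_{\zeta,\alpha}$ unchanged. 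Then the first two terms are an exact Birkhoff sum along the orbit of the pair $(\zeta,\alpha)$. The remaining terms are smooth in $(\theta(\zeta),\theta(\alpha)) \in \closure{D_{j_\ell}}^{\,2}$. Hence $\sum_{\theta \neq \Id} e^{-sU_{\zeta,\alpha}(\theta)} = \sum_{\ell\ge1}(\mathcal{L}_s^{\ell}h_s)(\zeta,\alpha)$, where $\mathcal{L}_s h(z_1,z_2) = \sum_k \abs{\theta_k'(z_1)\theta_k'(z_2)}^{s/2} h(\theta_k z_1,\theta_k z_2)$ over admissible $k$, and $h_s(z_1,z_2) = \abs{\zeta-\alpha}^{2s}\abs{z_1-\zeta}^{-s}\abs{z_2-\alpha}^{-s}$.

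This removes the tail problem and makes the detour through $d(\theta)$ unnecessary. Alternatively, freeze the tail dependence at $k$ letters, apply the renewal theorem to each of the finitely many resulting sums, and let $k \to \infty$ by a sandwich argument.
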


We use \cref{thm:pollicott} only as a quoted fact; the same statement is Theorem~1.3 in~\cite{pollicott2021Schottky}.
This statement is invoked in \cref{sec:counting} to count the potential level sets that underlie the relative-error bound of \cref{thm:error} and the complexity estimate of \cref{prop:alg}.
The term-wise estimate $\abs{R_{\zeta, \alpha}(\theta) - 1} \sim 4 e^{-d(\theta)}$ relating factor size to hyperbolic distance is also taken from~\cite{pollicott2021Schottky}.

% ============================================================
\section{Potential and proposed truncation}
\label{sec:potential}

Unlike level truncation, which orders group elements by word length, the scheme described in this section ranks each factor $R_{\zeta, \alpha}(\theta)$ in~\eqref{eq:sk-prime} according to a scalar \emph{potential} $U_{\zeta, \alpha}(\theta)$ defined directly from cross-ratios and motivated by the hyperbolic picture of \cref{sec:pollicott}.
\Cref{sec:def-potential} introduces $U$, records the exact relation $\abs{R_{\zeta, \alpha}(\theta) - 1} = e^{-U_{\zeta, \alpha}(\theta)}$, and explains why large values of $U_{\zeta, \alpha}(\theta)$ correspond to negligible factors in the S--K prime product.
\Cref{sec:defn-trunc} defines potential truncation at a cutoff $U_{\max}$ and contrasts it with level truncation on the tree of the Schottky group.

\subsection{Potential associated with a group element}
\label{sec:def-potential}

\begin{definition}[Potential]
  \label{def:potential}
  For $\theta \in \Theta \setminus \{\Id\}$, define the \emph{potential} as
  \begin{equation}
    U_{\zeta, \alpha}(\theta)
    \defeq - \log \abs{R_{\zeta, \alpha}(\theta) - 1}.
    \label{eq:potential-def}
  \end{equation}
\end{definition}

By~\eqref{eq:R-cross-ratio}, $R_{\zeta, \alpha}(\theta)$ is the inverse of the cross-ratio of the four points $\zeta$, $\alpha$, $\theta(\zeta)$, and $\theta(\alpha)$; thus, for each $\theta \in \Theta$, $U_{\zeta, \alpha}(\theta)$ is a real number determined by the reference pair $(\zeta, \alpha)$ and the hole data $\{\delta_m, q_m\}_{m = 1}^{M}$.
This definition is motivated by Pollicott's estimate~\cite{pollicott2021Schottky} $\abs{R_{\zeta, \alpha}(\theta) - 1} \sim 4 e^{-d(\theta)}$ as $d(\theta) \to \infty$, where $d(\theta)$ is the hyperbolic distance between the geodesics $[\zeta, \alpha]$ and $\theta([\zeta, \alpha])$ in $\UpperHalf^3$, as defined in \cref{sec:pollicott}.
From~\eqref{eq:potential-def}, the exact factor-wise relation is
\begin{equation}
  \abs{R_{\zeta, \alpha}(\theta) - 1} = e^{-U_{\zeta, \alpha}(\theta)}.
  \label{eq:term-as-potential}
\end{equation}

To see why $U$ governs the truncation of~\eqref{eq:sk-prime}, write each factor as a perturbation of unity:
\begin{equation}
  \omega(\zeta, \alpha)
  = (\zeta - \alpha)\prod_{\theta \in \Theta''} R_{\zeta, \alpha}(\theta)
  = (\zeta - \alpha)\prod_{\theta \in \Theta''}\ab(1 + (R_{\zeta, \alpha}(\theta) - 1)).
  \label{eq:omega-as-perturbation}
\end{equation}
By~\eqref{eq:term-as-potential}, the perturbation in the $\theta$-factor has magnitude $e^{-U_{\zeta, \alpha}(\theta)}$.
Therefore, elements with large $U_{\zeta, \alpha}(\theta)$ make the smallest multiplicative corrections and are natural candidates for omission in a partial product.
This suggests the truncation $U_{\zeta, \alpha}(\theta) \le U_{\max}$ introduced in~\cref{sec:defn-trunc}.

\subsection{Truncation by potential}
\label{sec:defn-trunc}

Motivated by this observation, we introduce a truncation scheme that selects group elements according to their potential rather than their word length.

\begin{definition}[Potential truncation]
  \label{def:trunc}
  For a potential cutoff $U_{\max} > 0$, define
  \begin{equation}
    \Theta''_{\le U_{\max}}
    \defeq
    \{\theta \in \Theta'' : U_{\zeta, \alpha}(\theta) \le U_{\max}\},
    \label{eq:Theta-leqU}
  \end{equation}
  and the partial product
  \begin{equation}
    \omega_{U_{\max}}(\zeta, \alpha)
    \defeq
    (\zeta - \alpha)
    \prod_{\theta \in \Theta''_{\le U_{\max}}}
    R_{\zeta, \alpha}(\theta).
    \label{eq:trunc-def}
  \end{equation}
\end{definition}

Unlike level truncation~\eqref{eq:level-trunc}, whether a given $\theta \in \Theta''$ belongs to $\Theta''_{\le U_{\max}}$ is determined by $U_{\zeta,\alpha}(\theta)$, and hence by $(\zeta,\alpha)$ and the hole configuration, rather than by word length alone.
Equivalently, $\Theta''_{\le U_{\max}}$ comprises those $\theta \in \Theta''$ with $\abs{R_{\zeta, \alpha}(\theta) - 1} \ge e^{-U_{\max}}$.
On the tree of the Schottky group (\Cref{fig:level-tree}), level truncation fills the slab of all vertices up to depth~$L_{\max}$ (\Cref{fig:level-trunc-tree}), whereas potential truncation keeps only the vertices with $U_{\zeta, \alpha}(\theta) \le U_{\max}$ (\Cref{fig:potential-trunc-tree}), a sparse selection that typically cuts across levels.
Intuitively, the latter is more economical than retaining an entire level slab when many shallow words contribute little to the product.

\begin{figure}[t]
  \centering
  \subfloat[Level truncation]{\label{fig:level-trunc-tree}%
  \resizebox{0.48\textwidth}{!}{\begin{tikzpicture}[x=0.82cm, y=0.88cm, font=\Large]
  \tikzset{
    schottky base/.append style={
      font=\Large,
      inner sep=4.5pt,
      minimum height=8mm,
    },
  }
  \SchottkyTreeLmaxThree

  \draw[frontier line] (\SchottkyCoordFrontierLmaxThree, -3.55) -- (\SchottkyCoordFrontierLmaxThree, 3.75);
  \node[anchor=south, cud level label, font=\normalsize] at (\SchottkyCoordFrontierLmaxThree, 3.80) {$L_{\max} = 3$};
\end{tikzpicture}}%
}
  \hfill
  \subfloat[Potential truncation]{\label{fig:potential-trunc-tree}%
  \resizebox{0.48\textwidth}{!}{% Potential truncation schematic on the Schottky-group tree ($M = 2$).

\begin{tikzpicture}[x=0.82cm, y=0.88cm, font=\Large]
  \tikzset{
    schottky base/.append style={
      font=\Large,
      inner sep=4.5pt,
      minimum height=8mm,
    },
  }
  \SchottkyPlacePotentialExample
  \SchottkyEdgesPotentialExample
\end{tikzpicture}}%
}
  \caption{Truncation on the tree of the Schottky group for $M = 2$.
    (a)~Level truncation fills the slab of all vertices up to depth~$L_{\max}$ (\tikzlegendkept: $\level(\theta) \le L_{\max}$; \tikzlegendpruned: $\level(\theta) > L_{\max}$).
    (b)~Potential truncation retains only vertices with $U_{\zeta, \alpha}(\theta) \le U_{\max}$ (\tikzlegendkept: $U_{\zeta, \alpha}(\theta) \le U_{\max}$; \tikzlegendpruned: $U_{\zeta, \alpha}(\theta) > U_{\max}$).
    The selection is scattered across levels rather than a full level slab.}
  \label{fig:truncation-trees}
\end{figure}

Words of any level can be included or excluded, depending on the domain data.
Therefore, the two practical issues are as follows: (i)~to enumerate $\Theta''_{\le U_{\max}}$ without exploring every vertex up to large depth in the tree of the Schottky group, as level truncation does in \Cref{fig:level-trunc-tree}, and (ii)~to quantify the truncation error $\abs{\omega_{U_{\max}} - \omega}/\abs{\omega}$ and the cost of the enumeration as functions of~$U_{\max}$.
% ============================================================
\section{Efficient enumeration algorithm}
\label{sec:algorithm}

Throughout this section, we fix reference points $\zeta, \alpha \in \Domain$.
All quantities introduced from this point onward are evaluated for this fixed pair, together with the hole data and the relevant group element or word where appropriate.
For readability, we therefore suppress the fixed parameter pair and write, for example, $R(\theta)$ and $U(\theta)$ (or $U(\psi)$), restoring explicit dependence on $(\zeta, \alpha)$ only when needed.

We now present an algorithm that enumerates the truncation set $\Theta''_{\le U_{\max}}$ in~\eqref{eq:Theta-leqU} and assembles the partial product $\omega_{U_{\max}}$ in~\eqref{eq:trunc-def}.

A direct scan of the tree of the Schottky group cannot terminate on its own: the tree is infinite and $U_{\max}$ does not bound the word length.
Therefore, we use a depth-first search that, on reaching a vertex $\psi$, decides whether the subtree at $\psi$ may contain elements of $\Theta''_{\le U_{\max}}$ and, if not, discards it.
The decision requires a lower bound on $U$ that holds for every descendant of $\psi$.
\Cref{thm:increment}, proved in \cref{sec:increment}, supplies one-step lower bounds on the change of~$U$ when a generator is prepended; those bounds depend only on the consecutive generators in the reduced word.
\Cref{sec:graph-G} turns them into a path bound on the tree and a walk-weight bound on a finite directed graph~$G$.
\Cref{sec:wmin} introduces the minimum walk weight $W_{\min}$ of~$G$, which is the worst-case slack used by the pruning rule.
\Cref{sec:enum-alg} states the depth-first traversal that produces $\Theta''_{\le U_{\max}}$ and $\omega_{U_{\max}}$; the Bellman--Ford algorithm that computes $W_{\min}$ appears in the supplementary material (\cref{sup:alg:bellman-ford}).

\subsection{From the potential increment to a finite weighted graph}
\label{sec:graph-G}

Write $\psi = \theta_{j_\ell} \cdots \theta_{j_1}$ for a reduced word and let $k \neq -j_\ell$.
The one-step change of potential under the prepend $\psi \mapsto \theta_k\psi$ is the \emph{potential increment}
\begin{equation}
  \Delta_k(\psi)
  \defeq
  U(\theta_k\psi) - U(\psi).
  \label{eq:increment-def-alg}
\end{equation}
\Cref{thm:increment} gives constants $\Delta_{j_\ell, k}^{\lb}$ and $\Delta_{j_\ell, k}^{\ub}$, depending only on the consecutive generators $\theta_{j_\ell}, \theta_k$ and on $(\zeta, \alpha)$, such that
\begin{equation}
  \Delta_{j_\ell, k}^{\lb}
  \le
  \Delta_k(\psi)
  \le
  \Delta_{j_\ell, k}^{\ub}.
  \label{eq:increment-bound-alg}
\end{equation}
Here, the superscripts ``lb'' and ``ub'' stand for lower bound and upper bound.
Only the lower bounds enter the construction below: in particular,
\begin{equation}
  U(\theta_k\psi) - U(\psi) \ge \Delta_{j_\ell, k}^{\lb},
  \label{eq:increment-lower}
\end{equation}
and the $2M(2M - 1)$ quantities $\Delta_{j, k}^{\lb}$ are computed once from~\eqref{eq:deltajk-lb} in $O(M^2)$ time before the search starts.

Now, fix a vertex $\psi_\ell = \theta_{j_\ell} \cdots \theta_{j_1}$ at level~$\ell$ and consider a descendant $\psi_{\ell'}$ at some level $\ell' \ge \ell$.
The path from $\psi_\ell$ to $\psi_{\ell'}$ in the tree of the Schottky group (\Cref{fig:tree-to-graph}) is the chain $\psi_\ell, \psi_{\ell + 1}, \ldots, \psi_{\ell'}$ with $\psi_m = \theta_{j_m} \cdots \theta_{j_1}$.
Applying~\eqref{eq:increment-lower} at each prepend gives
\begin{equation}
  U(\psi_{\ell'}) \ge U(\psi_\ell) + \sum_{m = \ell + 1}^{\ell'} \Delta_{j_{m - 1}, j_m}^{\lb}.
  \label{eq:prefix-lower}
\end{equation}
The sum on the right depends only on the index list $(j_\ell, j_{\ell + 1}, \ldots, j_{\ell'})$, not on the contents of~$\psi_\ell$.

This makes it natural to work on a finite directed graph rather than on the infinite tree.
Let $G = (V, E, w)$ be the graph with vertices
\begin{equation*}
  V = \{\theta_{-M}, \ldots, \theta_{-1}, \theta_1, \ldots, \theta_M\},
\end{equation*}
edges $\theta_j \to \theta_k$ for every pair with $k \neq -j$, and weights $w(\theta_j, \theta_k) = \Delta_{j, k}^{\lb}$.
The constraint $k\neq -j$ excludes only the immediate cancellation $\theta_{-j}\theta_j = \Id$.
In particular, the self-loops $\theta_j \to \theta_j$ are present.
The graph has $2M$ vertices and $2M(2M - 1)$ edges.
For $M = 2$, there are four vertices and twelve edges (\Cref{fig:tree-to-graph}, right).

\begin{figure}[t]
  \centering
  \resizebox{0.88\textwidth}{!}{% Paper layout: Schottky tree (left) and graph $G$ (right).
% Presentation uses schottky-tree-path-to-walk.tex (stacked vertically).
% Modest panel scales only (not extreme): tree slightly smaller, $G$ larger.
\begin{tikzpicture}[x=0.82cm, y=0.88cm, font=\Large]
  \tikzset{
    schottky base/.append style={
        font=\Large,
        inner sep=4.5pt,
        minimum height=8mm,
      },
    graph node fixed/.style={
        graph node,
        font=\Large,
        minimum size=11mm,
        inner sep=0pt,
        align=center,
      },
    % Compensate panel scales so tree/graph $\Delta$ labels read at similar size.
    delta label tree/.style={
        fill=white,
        fill opacity=0.94,
        inner sep=0.5pt,
        font=\normalsize,
        text=cudVermillionDraw,
      },
    delta label graph/.style={
        fill=white,
        fill opacity=0.94,
        inner sep=1pt,
        font=\footnotesize,
        text=cudVermillionDraw,
      },
  }

  \def\TreePanelScale{0.82}
  \def\GraphPanelScale{1.28}
  \def\TreeCenterY{0.35}
  \def\GraphCenterY{0.975}
  \def\GraphShiftX{11.0}

  \begin{scope}[scale=\TreePanelScale, transform shape]
    \SchottkyPlaceId
    \node[schottky path] (t1) at (\SchottkyCoordLone, 2.80) {$\theta_1$};
    \node[schottky active] (t2) at (\SchottkyCoordLone, 1.40) {$\theta_2$};
    \node[schottky active] (tm1) at (\SchottkyCoordLone, 0.00) {$\theta_{-1}$};
    \node[schottky active] (tm2) at (\SchottkyCoordLone, -1.40) {$\theta_{-2}$};
    \SchottkyPlaceEllipsis
    \SchottkyPlaceLTwoFromTone
    \SchottkyPlaceLTwoFromTmtwo
    \SchottkyPlaceLThreeFromTmTwoTone
    \SchottkyPlaceLThreeDots
    \node[schottky path] (tm2t1) at (\SchottkyCoordLtwo, 1.10) {$\theta_{-2}\theta_1$};
    \node[schottky path] (tm1m2t1) at (\SchottkyCoordLthree, 1.50) {$\theta_{-1}\theta_{-2}\theta_1$};

    \stree{id}{t1}
    \stree{id}{t2}
    \stree{id}{tm1}
    \stree{id}{tm2}
    \SchottkyEdgesEllipsis
    \stree{t1}{t1t1}
    \stree{t1}{t2t1}
    \path (t1.west) edge[schottky edge path]
    node[delta label tree, above, pos=0.50, yshift=-3mm] {$\Delta_{1,-2}^{\lb}$} (tm2t1.east);
    \stree{tm2}{t1m2}
    \stree{tm2}{tm1m2}
    \stree{tm2}{tm2m2}
    \stree{tm2t1}{tm2m2t1}
    \path (tm2t1.west) -- node[delta label tree, above, pos=0.50, yshift=-5mm] {$\Delta_{-2,-1}^{\lb}$} (tm1m2t1.east);
    \stree{tm2t1}{t1m2t1}
    \path (tm2t1.west) edge[schottky edge path] (tm1m2t1.east);
    \SchottkyEdgesLThreeDots

    \node[anchor=north, font=\Large] at (4.0, -3.75) {Schottky tree};
  \end{scope}

  \pgfmathsetmacro{\GraphShiftY}{\TreePanelScale * \TreeCenterY - \GraphPanelScale * \GraphCenterY}

  \begin{scope}[scale=\GraphPanelScale, transform shape, shift={(\GraphShiftX, \GraphShiftY)}]
    \node[graph node fixed] (g1) at (-1.25, 0) {$\theta_1$};
    \node[graph node fixed] (g2) at (1.25, 0) {$\theta_2$};
    \node[graph node fixed] (gm1) at (1.25, 1.95) {$\theta_{-1}$};
    \node[graph node fixed] (gm2) at (-1.25, 1.95) {$\theta_{-2}$};

    \foreach \a/\b in {g1/g2, g2/gm1} {
        \draw[graph edge, bend left=18] (\a) to (\b);
        \draw[graph edge, bend left=18] (\b) to (\a);
      }
    \draw[graph edge, bend left=18] (gm1) to (gm2);
    \draw[graph edge, bend left=18] (gm2) to (g1);
    \draw[graph edge] (g1) to[loop left, looseness=5, min distance=4mm] (g1);
    \draw[graph edge] (g2) to[loop right, looseness=5, min distance=4mm] (g2);
    \draw[graph edge] (gm1) to[loop right, looseness=5, min distance=4mm] (gm1);
    \draw[graph edge] (gm2) to[loop left, looseness=5, min distance=4mm] (gm2);

    \draw[graph edge path, bend left=18] (g1) to
    node[delta label graph, left, pos=0.55, xshift=-1pt] {$\Delta_{1,-2}^{\lb}$} (gm2);
    \draw[graph edge path, bend left=18] (gm2) to
    node[delta label graph, above, pos=0.50] {$\Delta_{-2,-1}^{\lb}$} (gm1);

    \node[anchor=north, font=\normalsize] at (0, -0.75) {graph $G$};
  \end{scope}
\end{tikzpicture}}%

  \caption{Tree of the Schottky group (left) and the finite weighted graph~$G$ (right) for $M = 2$.
    A path in the tree, e.g., $\theta_1 \to \theta_{-2}\theta_1 \to \theta_{-1}\theta_{-2}\theta_1$, corresponds to a walk in~$G$ that traces the same index sequence and accumulates the edge weights $\Delta_{j, k}^{\lb}$.}
  \label{fig:tree-to-graph}
\end{figure}

Any index sequence $(i_1, \ldots, i_n)$ with $i_{m + 1}\neq -i_m$ traces a \emph{walk} $\theta_{i_1} \to \cdots \to \theta_{i_n}$ in~$G$.
We write the sum of the corresponding lower-bound edge weights as
\begin{equation}
  W(i_1, \ldots, i_n) \defeq \sum_{m = 2}^{n} \Delta_{i_{m - 1}, i_m}^{\lb}.
  \label{eq:walk-weight}
\end{equation}
With this notation,~\eqref{eq:prefix-lower} reads
\begin{equation}
  U(\psi_{\ell'}) \ge U(\psi_\ell) + W(j_\ell, j_{\ell + 1}, \ldots, j_{\ell'});
  \label{eq:descendant-walk-weight}
\end{equation}
the walk on the right is that traced by the index list of the path from $\psi_\ell$ to $\psi_{\ell'}$ (\Cref{fig:tree-to-graph}).

\subsection{Minimum walk weight and the pruning rule}
\label{sec:wmin}

To prune the subtree at $\psi_\ell$, we require a lower bound on $U(\psi_{\ell'})$ that holds uniformly over every descendant $\psi_{\ell'}$.
By~\eqref{eq:descendant-walk-weight}, this reduces to bounding the walk weight $W(j_\ell, \ldots, j_{\ell'})$ from below over every choice of extension $j_{\ell + 1}, \ldots, j_{\ell'}$.
Each such extension is a walk in $G$ starting at $\theta_{j_\ell}$.
Thus, it suffices to bound the weight of any walk in $G$ by a single constant.
Therefore, we define the \emph{minimum walk weight} of $G$ as
\begin{equation}
  W_{\min} \defeq \min_{\pi} \sum_{e \in \pi} w(e),
  \label{eq:wmin-def}
\end{equation}
where the minimum is over all walks $\pi$ in $G$, including the empty walk.
The empty walk has weight zero, so $W_{\min} \le 0$.
By construction, $W(j_\ell, \ldots, j_{\ell'}) \ge W_{\min}$ for every extension, hence
\begin{equation}
  U(\psi_{\ell'}) \ge U(\psi_\ell) + W_{\min}
  \qquad \text{for every descendant } \psi_{\ell'} \text{ of } \psi_\ell.
  \label{eq:descendant-lower}
\end{equation}
The pruning rule follows directly: if $U(\psi_\ell) > U_{\max} - W_{\min}$, then $U(\psi_{\ell'}) > U_{\max}$ for every descendant, and the subtree at $\psi_\ell$ can be discarded.

If $G$ has no negative cycle, $W_{\min} \in (-\infty, 0]$ is finite and is computed by the Bellman--Ford algorithm of graph theory (\cref{sup:alg:bellman-ford}).
When every edge weight is nonnegative, $W_{\min} = 0$ and the pruning rule reduces to $U(\psi_\ell) > U_{\max}$; the same algorithm still applies.

A negative cycle in~$G$ makes $W_{\min}=-\infty$ and the pruning rule vacuous, so \cref{alg:enumerate} aborts when such a cycle is detected by the Bellman--Ford algorithm.
\Cref{rem:upper-bound} explains why a negative cycle blocks pruning, and distinguishes this obstruction from genuine divergence of the product.

\subsection{The algorithm}
\label{sec:enum-alg}

The enumeration proceeds in three stages: precompute the edge weights $\Delta_{j, k}^{\lb}$ in closed form as described in \cref{sec:edge-weights-closed}, precompute $W_{\min}$ on~$G$ according to \cref{sup:alg:bellman-ford}, and traverse the tree of the Schottky group depth-first while applying the rule of~\cref{sec:wmin} and assembling the partial product~\eqref{eq:trunc-def} using \cref{alg:enumerate}.
\begin{algorithm}[t]
  \caption{Enumerate $\Theta''_{\le U_{\max}}$ and assemble $\omega_{U_{\max}}$.}
  \label{alg:enumerate}
  \begin{algorithmic}[1]
    \STATE \textbf{Input:} reference points $\zeta, \alpha \in \Domain$; cutoff $U_{\max} > 0$; lower bounds $\{\Delta_{j, k}^{\lb}\}$.
    \STATE \textbf{Output:} the list $\mathcal{O} = \Theta''_{\le U_{\max}}$ and the partial product $\omega_{U_{\max}}$.
    \STATE Run \cref{sup:alg:bellman-ford} on $G$ to obtain $W_{\min} \le 0$; abort if a negative cycle is detected.
    \STATE Initialize stack $S\Leftarrow [ \Id ]$, output list $\mathcal{O}\Leftarrow\emptyset$, partial product $\omega\Leftarrow\zeta - \alpha$.
    \WHILE{$S$ is non-empty}
    \STATE Pop $\theta$ from $S$.
    \STATE $K\Leftarrow\emptyset$.
    \IF{$\theta = \Id$}
    \STATE $K\Leftarrow\{\pm 1, \ldots, \pm M\}$.
    \ELSE
    \STATE Compute $U(\theta)$ via~\eqref{eq:potential-def}.
    \IF{$U(\theta) \le U_{\max} - W_{\min}$}
    \IF{$U(\theta) \le U_{\max}$ and $\theta \in \Theta''$}
    \STATE Append $\theta$ to $\mathcal{O}$ and update $\omega\Leftarrow \omega\cdot R(\theta)$.
    \ENDIF
    \STATE For $\theta = \theta_{j_\ell} \cdots \theta_{j_1}$, set $K\Leftarrow\{k \in \{\pm 1, \ldots, \pm M\} : k\neq -j_\ell\}$.
    \ENDIF
    \ENDIF
    \FORALL{$k \in K$}
    \STATE Push $\theta_k\theta$ onto $S$.
    \ENDFOR
    \ENDWHILE
    \STATE \textbf{return} $\mathcal{O}$ and $\omega_{U_{\max}}\Leftarrow \omega$.
  \end{algorithmic}
\end{algorithm}

The exact potential $U(\theta)$ is only evaluated via~\eqref{eq:potential-def} at words that survive the pruning rule.
Because $\Theta''_{\le U_{\max}}\subset\Theta''$ omits inverses (see~\eqref{eq:Theta-double-prime}), the implementation appends to $\mathcal{O}$ only one of $\theta$ and $\theta^{-1}$ whenever both appear in the tree.
We keep the representative whose index sequence is lexicographically smaller.
Correctness and complexity are formalized in \cref{prop:alg} of \cref{sec:complexity}, which rests on the counting estimate of \cref{lem:counting} in \cref{sec:counting}.

\begin{remark}[Negative cycles in~$G$]
  \label{rem:upper-bound}
  Suppose $G$ has a negative cycle when the edge weights are the lower bounds $\Delta_{j, k}^{\lb}$.
  Looping around that cycle drives the walk-weight lower bound arbitrarily far in the negative direction, and so~\eqref{eq:descendant-lower} supplies no finite floor on the potential of any descendant.
  In other words, no matter how deep one descends in the tree of the Schottky group, the lower bound need not force $U$ to become large.
  Deep vertices may therefore still contribute substantially to the infinite product.
  Thus, the search cannot safely prune a subtree midway, and \cref{alg:enumerate} aborts.
  This does not by itself certify divergence of the full product~\eqref{eq:sk-prime}.
  If, however, the analogous graph built with the upper bounds $\Delta_{j, k}^{\ub}$ admits a negative cycle, then $U$ can be forced to decrease along arbitrarily long words, and the truncated product does not converge.
  In contrast, if the lower-bound graph has no negative cycle for any $(\zeta, \alpha) \in \Domain \times \Domain$, then the infinite product~\eqref{eq:sk-prime} converges for every such pair.
  This is a sufficient condition: it does not settle the convergence of~\eqref{eq:sk-prime} in full generality.
\end{remark}
% ============================================================
\section{A uniform bound on the potential increment}
\label{sec:increment}

The edge weights $\Delta_{j,k}^{\lb}$ in~\eqref{eq:walk-weight} are derived from a single bound on the potential increment that is independent of the tail of the word, and the pruning step of \cref{alg:enumerate} follows from the resulting lower bound.
\Cref{sec:increment-why} explains why such a uniform bound exists at all: a short cross-ratio computation, together with the geometry of reduced words, simplifies the question to a one-dimensional optimization on a boundary circle.
\Cref{sec:increment-statement} states the resulting bound and its closed-form constants.
\Cref{sec:edge-weights-closed} shows that those constants can be evaluated in closed form, and therefore need only be precomputed once.

\subsection{Existence of a uniform bound}
\label{sec:increment-why}

Fix a reduced word $\psi = \theta_{j_\ell} \cdots \theta_{j_1} \in \Theta \setminus \{\Id\}$ and an index $k\neq -j_\ell$.
Restoring the fixed pair $(\zeta, \alpha)$, the potential increment~\eqref{eq:increment-def-alg} is
\begin{equation}
  \Delta_k(\psi; \zeta, \alpha)
  \defeq
  U_{\zeta, \alpha}(\theta_k\psi) - U_{\zeta, \alpha}(\psi),
  \qquad j = j_\ell,
  \label{eq:increment-def}
\end{equation}
and $\theta_k\psi$ is the child of $\psi$ in the tree of the Schottky group.
The next four steps explain why $\Delta_k(\psi; \zeta, \alpha)$ admits upper and lower bounds that depend only on $\theta_j$, $\theta_k$, $\zeta$, and $\alpha$.

\paragraph*{Step~1: rewrite the increment using two image points}
Set
\begin{equation*}
  \zeta' \defeq \psi(\zeta), \qquad
  \alpha' \defeq \psi(\alpha).
\end{equation*}
By~\eqref{eq:R-cross-ratio},
$R_{\zeta, \alpha}(\theta_k\psi) = R(\zeta, \alpha, \theta_k(\zeta'), \theta_k(\alpha'))^{-1}$ and $R_{\zeta, \alpha}(\psi) = R(\zeta, \alpha, \zeta', \alpha')^{-1}$.
By the M\"obius invariance of the cross-ratio,
\begin{equation*}
  R\ab(\zeta, \alpha, \theta_k(\zeta'), \theta_k(\alpha'))
  = R\ab(\theta_k^{-1}(\zeta), \theta_k^{-1}(\alpha), \zeta', \alpha').
\end{equation*}
From~\eqref{eq:potential-def} and~\eqref{eq:increment-def},
\begin{equation}
  \Delta_k(\psi; \zeta, \alpha)
  =
  \log \abs{
    \frac{R\ab(\zeta, \alpha, \zeta', \alpha')^{-1} - 1}
    {R\ab(\theta_k^{-1}(\zeta), \theta_k^{-1}(\alpha), \zeta', \alpha')^{-1} - 1}
  }.
  \label{eq:increment-rewrite}
\end{equation}
Once $\zeta$, $\alpha$, and $\theta_k$ have been fixed, the right-hand side depends only on $(\zeta', \alpha')$.

\paragraph*{Step~2: confine $(\zeta', \alpha')$ to $\closure{D_j}$}
Write $\psi = \theta_j\tau$ with $\tau \defeq \theta_{j_{\ell - 1}} \cdots \theta_{j_1}$.
Because $\psi$ is a reduced word and $j = j_\ell$ is its leftmost index, the M\"obius transformation $\theta_j$ maps $\Complex \setminus \closure{D_{-j}}$ onto $D_j$.
In particular,
\begin{equation}
  \psi(\zeta), \ \psi(\alpha) \in \closure{D_j}
  \quad\text{for every reduced $\psi$ with $j_\ell = j$, }
  \label{eq:image-in-Dj}
\end{equation}
regardless of the tail~$\tau$, and so $\zeta', \alpha' \in \closure{D_j}$.
Hence, bounding $\Delta_k(\psi; \zeta, \alpha)$ over all $\psi$ with $j_\ell = j$ reduces to bounding the right-hand side of~\eqref{eq:increment-rewrite} over all $(\zeta', \alpha') \in \closure{D_j} \times \closure{D_j}$:
\begin{align*}
  \Delta_{j, k}^{\lb}(\zeta, \alpha)
   & \defeq
  \min_{(\zeta', \alpha') \in \closure{D_j} \times \closure{D_j}}
  \log \abs{
    \frac{R\ab(\zeta, \alpha, \zeta', \alpha')^{-1} - 1}
    {R\ab(\theta_k^{-1}(\zeta), \theta_k^{-1}(\alpha), \zeta', \alpha')^{-1} - 1}
  },
  \\
  \Delta_{j, k}^{\ub}(\zeta, \alpha)
   & \defeq
  \max_{(\zeta', \alpha') \in \closure{D_j} \times \closure{D_j}}
  \log \abs{
    \frac{R\ab(\zeta, \alpha, \zeta', \alpha')^{-1} - 1}
    {R\ab(\theta_k^{-1}(\zeta), \theta_k^{-1}(\alpha), \zeta', \alpha')^{-1} - 1}
  },
\end{align*}
and
\begin{equation*}
  \Delta_{j, k}^{\lb}(\zeta, \alpha)
  \le
  \inf_{\tau}
  \Delta_k(\theta_j\tau; \zeta, \alpha)
  \le
  \Delta_k(\psi; \zeta, \alpha)
  \le
  \sup_{\tau}
  \Delta_k(\theta_j\tau; \zeta, \alpha)
  \le
  \Delta_{j, k}^{\ub}(\zeta, \alpha).
\end{equation*}
Here, ``lb'' and ``ub'' denote lower and upper bounds from the min/max operation over $(\zeta', \alpha')$; the infimum and supremum in the chain are over $\tau$, not over~$\psi$.
\paragraph*{Step~3: separate the two image points by choosing a reference at a fixed point}
Pick either of the two fixed points $z_k^{\fix}$ of $\theta_k$ in $\Complex \cup \{\infty\}$, and define, for $w \in \Complex$,
\begin{equation}
  R_k(w; \zeta) \defeq
  R\ab(w, z_k^{\fix}, \theta_k^{-1}(\zeta), \zeta),
  \qquad
  R_k(w; \alpha) \defeq
  R\ab(w, z_k^{\fix}, \theta_k^{-1}(\alpha), \alpha).
  \label{eq:Rz-def}
\end{equation}
As shown in \cref{sup:sec:cross-ratio-proofs}, the right-hand side of~\eqref{eq:increment-rewrite} is equal to
\begin{equation}
  \Delta_k(\psi; \zeta, \alpha)
  =
  - \log \abs{\theta_k'\ab(z_k^{\fix})}
  + \log \abs{R_k(\zeta'; \zeta)}
  + \log \abs{R_k(\alpha'; \alpha)},
  \label{eq:increment-separated}
\end{equation}
in which the first term depends only on $(\theta_j, \theta_k)$ and the only $\psi$-dependence enters through $w = \zeta'$ and $w = \alpha'$ in~\eqref{eq:Rz-def}.

\paragraph*{Step~4: collapse the optimization to the boundary $C_j$}
For fixed $\zeta$, $R_k(w; \zeta)$ is a M\"obius transformation in~$w$ with neither zeros nor poles in $D_j$, and so $\log \abs{R_k(w; \zeta)} = \Real{\log R_k(w; \zeta)}$ is harmonic on $D_j$.
By the maximum and minimum principles, its extrema over $\closure{D_j}$ are attained on the boundary~$C_j$.
The same holds for $\log \abs{R_k(w; \alpha)}$.
Combining this with~\eqref{eq:image-in-Dj} and~\eqref{eq:increment-separated} reduces the four-dimensional optimization over $(\zeta', \alpha') \in \closure{D_j} \times \closure{D_j}$ to two one-dimensional optimizations over $C_j$, the optima of which depend only on $\theta_j$, $\theta_k$, $\zeta$, and $\alpha$.

\subsection{Statement of the bound}
\label{sec:increment-statement}

Combining the four steps of \cref{sec:increment-why} yields the following bound.

\begin{theorem}[Uniform increment bound]
  \label{thm:increment}
  Let $j, k \in \{\pm 1, \ldots, \pm M\}$ with $k\neq -j$.
  For every reduced $\psi = \theta_{j_\ell} \cdots \theta_{j_1} \in \Theta \setminus \{\Id\}$ with $j = j_\ell$,
  \begin{equation}
    \Delta_{j, k}^{\lb}(\zeta, \alpha) \le \Delta_k(\psi; \zeta, \alpha) \le \Delta_{j, k}^{\ub}(\zeta, \alpha),
    \label{eq:increment-bound}
  \end{equation}
  where, with $\theta_k'(z) = q_k^2/(1 - \conj{\delta_k}z)^2$,
  \begin{align}
    \Delta_{j, k}^{\lb}(\zeta, \alpha)
     & \defeq
    - \log \abs{\theta_k'\ab(z_k^{\fix})}
    + \min_{w \in C_j} \log \abs{R_k(w; \zeta)}
    + \min_{w \in C_j} \log \abs{R_k(w; \alpha)},
    \label{eq:deltajk-lb} \\
    \Delta_{j, k}^{\ub}(\zeta, \alpha)
     & \defeq
    - \log \abs{\theta_k'\ab(z_k^{\fix})}
    + \max_{w \in C_j} \log \abs{R_k(w; \zeta)}
    + \max_{w \in C_j} \log \abs{R_k(w; \alpha)}.
    \label{eq:deltajk-ub}
  \end{align}
\end{theorem}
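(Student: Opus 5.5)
The proof assembles the four steps of \cref{sec:increment-why}; the main work is the cross-ratio identity~\eqref{eq:increment-separated} and the harmonicity check behind Step~4. First, with $\zeta' = \psi(\zeta)$ and $\alpha' = \psi(\alpha)$, Möbius invariance of the cross-ratio rewrites the increment as~\eqref{eq:increment-rewrite}. Second, we confine the image points: since $j_\ell = j$ and $\psi = \theta_j\tau$ with $\tau$ reduced, we have $\tau(\zeta), \tau(\alpha) \notin D_{-j}$. For $\tau = \Id$ this holds because $\zeta, \alpha \in \Domain$ lie outside every $\closure{D_m}$. For $\tau \ne \Id$ it follows by induction from the ping-pong property: the leftmost letter of $\tau$ is not $\theta_{-j}$, so its image lies in some $\closure{D_i}$ with $i \ne -j$, and these closed disks are disjoint from $D_{-j}$. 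Hence $\zeta', \alpha' \in \closure{D_j}$. This is~\eqref{eq:image-in-Dj}, which we verify as a short lemma.

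Next we establish~\eqref{eq:increment-separated}. Write $a = \theta_k^{-1}(\zeta)$ and $b = \theta_k^{-1}(\alpha)$. A direct computation gives $R(\zeta,\alpha,\zeta',\alpha')^{-1} - 1 = \frac{(\zeta-\alpha)(\zeta'-\alpha')}{(\zeta'-\zeta)(\alpha'-\alpha)}$ up to sign, and similarly with $(a,b)$ in place of $(\zeta,\alpha)$. The ratio of the two expressions is then
\[
\frac{(\zeta-\alpha)(\zeta'-a)(\alpha'-b)}{(a-b)(\zeta'-\zeta)(\alpha'-\alpha)},
\]
in which the factors $\zeta'-\alpha'$ have cancelled. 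Each remaining factor $(\zeta'-a)/(\zeta'-\zeta)$ is, up to a constant, $R_k(\zeta';\zeta)$: inserting the fixed point $z_k^{\fix}$ produces factors $(z_k^{\fix}-\zeta)/(z_k^{\fix}-a)$. The leftover constant $(\zeta-\alpha)(z_k^{\fix}-\zeta)\cdots/\bigl((a-b)(z_k^{\fix}-a)\cdots\bigr)$ must be identified with $1/\theta_k'(z_k^{\fix})$ in modulus. For this we use the identity $(\theta(x)-\theta(y))^2 = \theta'(x)\theta'(y)(x-y)^2$ for Möbius maps, applied to the pairs $(a,b)$, $(a,z_k^{\fix})$, and $(b,z_k^{\fix})$, together with $\theta_k(z_k^{\fix}) = z_k^{\fix}$. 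The products of derivatives then cancel except for $\theta_k'(z_k^{\fix})$. This is the most computation-heavy step; I expect the bookkeeping of derivative factors, including the case $z_k^{\fix} = \infty$ handled by a limit, to be the main obstacle. The paper defers this computation to the supplement, so it can be cited.

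Finally, for fixed $\zeta$ the map $w \mapsto R_k(w;\zeta)$ is Möbius, with zero at $w = \theta_k^{-1}(\zeta)$ and pole at $w = \zeta$ (or the reverse, depending on the cross-ratio convention). Neither point lies in $\closure{D_j}$: $\zeta \in \Domain$, and $\theta_k^{-1}(\zeta) = \theta_{-k}(\zeta) \in D_{-k}$, which is disjoint from $\closure{D_j}$ because $-k \ne j$. Thus $\log\abs{R_k(\cdot;\zeta)}$ is harmonic on a neighborhood of $\closure{D_j}$, so its extrema over $\closure{D_j}$ are attained on $C_j$. The same holds for $\alpha$. Taking the minimum and maximum separately in each of the two $w$-terms of~\eqref{eq:increment-separated}, evaluated at $w = \zeta'$ and $w = \alpha'$ in $\closure{D_j}$, gives~\eqref{eq:increment-bound} with the constants~\eqref{eq:deltajk-lb}--\eqref{eq:deltajk-ub}, independently of $\tau$.
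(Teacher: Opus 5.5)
Your proposal is correct and follows the paper's own proof (the four steps of \cref{sec:increment-why}, with the fixed-point separation carried out in the supplement). It also spells out two points the paper only asserts: the ping-pong induction behind \eqref{eq:image-in-Dj}, and the fact that the zero $\theta_{-k}(\zeta)\in D_{-k}$ and the pole $\zeta$ of $R_k(\cdot;\zeta)$ lie off $\closure{D_j}$ because $-k\neq j$.

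There is one bookkeeping slip. With your $a=\theta_k^{-1}(\zeta)$ and $b=\theta_k^{-1}(\alpha)$, the leftover constant should be $\frac{(\zeta-\alpha)(z_k^{\fix}-a)(z_k^{\fix}-b)}{(a-b)(z_k^{\fix}-\zeta)(z_k^{\fix}-\alpha)}$; the fixed-point factors are the other way up from what you wrote. Only in this orientation does your identity $(\theta(x)-\theta(y))^2=\theta'(x)\theta'(y)(x-y)^2$ reduce the square of the constant to $\theta_k'(z_k^{\fix})^{-2}$. That identity gives the same result as the supplement's difference-quotient computation, up to sign, and the sign is all you can lose since only moduli enter.
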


The constants $\Delta_{j, k}^{\lb}$ and $\Delta_{j, k}^{\ub}$ are determined entirely by $\theta_j$, $\theta_k$, $\zeta$, and $\alpha$, completely independent of the tail $\theta_{j_{\ell - 1}} \cdots \theta_{j_1}$ of $\psi$.
Telescoping~\eqref{eq:increment-bound} along the prefix path $\psi_i = \theta_{j_i} \cdots \theta_{j_1}$ yields the prefix bound~\eqref{eq:prefix-lower} from~\cref{sec:graph-G}, and identifies the constants $\Delta_{j, k}^{\lb}$ as the edge weights of the graph $G$ used in \cref{alg:enumerate}.

\subsection{Closed form and precomputation}
\label{sec:edge-weights-closed}

The boundary extrema in~\eqref{eq:deltajk-lb}--\eqref{eq:deltajk-ub} admit a particularly simple closed form, because $R_k(w; \zeta)$ is a M\"obius transformation in~$w$.
As $w$ traces the circle $C_j$, the values $R_k(w; \zeta)$ fill the circle
\begin{equation*}
  \Sigma_{j, k}^{\zeta}
  \defeq \ab\{R_k(w; \zeta) : w \in C_j\}.
\end{equation*}
Let $\delta_{j, k}^{\zeta} \in \Complex$ and $q_{j, k}^{\zeta} > 0$ denote the center and radius of $\Sigma_{j, k}^{\zeta}$ (explicit formulas in \cref{sup:sec:max-min-on-circle}).
Because $\Sigma_{j, k}^{\zeta} = \ab\{w \in \Complex : \abs{w - \delta_{j, k}^{\zeta}} = q_{j, k}^{\zeta}\}$, the extrema of $\abs{w}$ over $w \in \Sigma_{j, k}^{\zeta}$ are attained on the ray through~$0$ and~$\delta_{j, k}^{\zeta}$.
With $\eta_{j, k}^{\zeta} = q_{j, k}^{\zeta}/\abs{\delta_{j, k}^{\zeta}}$ from~\eqref{sup:eq:eta-zeta} and $\eta_{j, k}^{\zeta} < 1$, we obtain
\begin{equation}
  \max_{w \in C_j}\abs{R_k(w; \zeta)}
  = \abs{\delta_{j, k}^{\zeta}} \ab(1 + \eta_{j, k}^{\zeta}),
  \qquad
  \min_{w \in C_j}\abs{R_k(w; \zeta)}
  = \abs{\delta_{j, k}^{\zeta}} \ab(1 - \eta_{j, k}^{\zeta}),
  \label{eq:max-min}
\end{equation}
with the same formulas for $\abs{R_k(w; \alpha)}$ using $(\delta_{j, k}^{\alpha}, \eta_{j, k}^{\alpha})$.
Equations \eqref{sup:eq:delta-q-zeta-full} and~\eqref{sup:eq:eta-zeta} provide explicit M\"obius-composition expressions in terms of $(\theta_j, \theta_k, \zeta, \alpha)$ for $(\delta_{j, k}^{\zeta}, q_{j, k}^{\zeta})$, $(\delta_{j, k}^{\alpha}, q_{j, k}^{\alpha})$, and hence for $(\eta_{j, k}^{\zeta}, \eta_{j, k}^{\alpha})$.

Therefore, substituting~\eqref{eq:max-min} into~\eqref{eq:deltajk-lb}--\eqref{eq:deltajk-ub} produces $\Delta_{j, k}^{\lb}$ and $\Delta_{j, k}^{\ub}$ in closed form.
Two consequences are central to the algorithm of \cref{sec:algorithm}:
\begin{itemize}
  \item The bounds depend only on the geometric data $\{(\delta_m, q_m)\}_{m = 1}^M$ of the holes and on $\zeta, \alpha \in \Domain$.
        They do not depend on the truncation parameter $U_{\max}$, on the word $\psi$, or on any other run-time quantity.
  \item All $2M(2M - 1)$ constants $\Delta_{j, k}^{\lb}, \Delta_{j, k}^{\ub}$ can therefore be precomputed in $O(M^2)$ time and memory, once and for all, before the enumeration starts.
\end{itemize}

\begin{remark}[Tightness of the increment bounds]
  \label{rem:increment-tightness}
  The gap $\Delta_{j, k}^{\ub} - \Delta_{j, k}^{\lb}$ measures the worst-case spread of $\Delta_k(\psi; \zeta, \alpha)$ as $\psi$ varies.
  Numerical experiments (\cref{fig:exp-increment-boxplot}) suggest that this gap is small in practice: the increment is effectively determined by the pair $(\theta_j, \theta_k)$ alone.
  \Cref{thm:increment} makes this dependence rigorous by supplying closed-form bounds that depend only on those two generators (together with $\zeta$ and~$\alpha$).
\end{remark}

% ============================================================
\section{Error bound and algorithmic complexity}
\label{sec:error}

We next bound the relative error of potential truncation and the cost of the enumeration algorithm \cref{alg:enumerate}.
Both reduce to estimating the same counting function,
\begin{equation*}
  N(u) \defeq \Card \ab\{\theta \in \Theta'': U_{\zeta, \alpha}(\theta) \le u\}.
\end{equation*}
We derive the asymptotic behavior of this function in \cref{sec:counting}, and then apply that single estimate to obtain the error bound in \cref{sec:error-statement} and the complexity bound in \cref{sec:complexity}.

\subsection{Counting estimate from Pollicott's theorem}
\label{sec:counting}

By~\eqref{eq:term-as-potential}, $\abs{R_{\zeta, \alpha}(\theta) - 1} = e^{-U_{\zeta, \alpha}(\theta)}$, and hence
\begin{equation*}
  \{\theta \in \Theta'': U_{\zeta, \alpha}(\theta) \le U_{\max}\}
  = \{\theta \in \Theta'': \abs{R_{\zeta, \alpha}(\theta) - 1} \ge e^{-U_{\max}}\}.
\end{equation*}
Applying~\eqref{eq:pollicott-count} with $T = e^{U_{\max}}$ yields the following estimate.
\begin{lemma}[Counting estimate for the potential]
  \label{lem:counting}
  Let $C > 0$ and $\delta \in (0, 2)$ be as in \cref{thm:pollicott}.
  Then,
  \begin{equation}
    N(u) \defeq
    \Card \ab\{\theta \in \Theta'' : U_{\zeta, \alpha}(\theta) \le u\}
    \sim C e^{\delta u},
    \qquad \text{as }u \to \infty.
    \label{eq:counting-U}
  \end{equation}
\end{lemma}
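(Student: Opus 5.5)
The plan is a direct change of variables in \cref{thm:pollicott}, with some care over which set is being counted. First, by \eqref{eq:term-as-potential}, for every $\theta \in \Theta \setminus \{\Id\}$ we have $U_{\zeta,\alpha}(\theta) \le u$ if and only if $\abs{R_{\zeta,\alpha}(\theta) - 1} \ge e^{-u}$. This is because $t \mapsto -\log t$ is strictly decreasing on $(0,\infty)$. It needs $R_{\zeta,\alpha}(\theta) \neq 1$, which we verify: $R_{\zeta,\alpha}(\theta)=1$ would force the cross-ratio $R(\zeta,\alpha,\theta(\zeta),\theta(\alpha))$ to equal $1$, i.e.\ $\theta(\zeta)=\zeta$ or $\theta(\alpha)=\alpha$, and this is impossible because fixed points of non-identity elements lie in the limit set, outside $\closure{\Domain}$. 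Hence
\begin{equation*}
  \{\theta \in \Theta'' : U_{\zeta,\alpha}(\theta) \le u\}
  = \{\theta \in \Theta'' : \abs{R_{\zeta,\alpha}(\theta) - 1} \ge e^{-u}\}
\end{equation*}
as sets of representatives, and the identity (for which $R$ is undefined) plays no role.

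Second, I will match this set with the one counted in \eqref{eq:pollicott-count}. Using the pair-swap symmetry $R(a,b,c,d) = R(c,d,a,b)$ and M\"obius invariance, one checks that $R_{\zeta,\alpha}(\theta^{-1}) = R_{\zeta,\alpha}(\theta)$. Thus the potential is a well-defined function on the classes $\theta \sim \theta^{-1}$, and the condition $\abs{R-1}\ge 1/T$ is a condition on classes. \Cref{thm:pollicott} is quoted from \cite[Theorem~1.3]{pollicott2021Schottky}, where the count is taken over the distinct factors of the product \eqref{eq:sk-prime}, i.e.\ over exactly one representative per class of non-identity elements. I will read the cardinality in \eqref{eq:pollicott-count} in that sense, as the paper does when it invokes the theorem in this section. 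With that reading, the set on the left of \eqref{eq:pollicott-count} coincides with the set above once $T = e^{u}$.

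Third, substitute $T = e^{u}$. Since $u \mapsto e^{u}$ is a monotone bijection of $\Reals$ onto $(0,\infty)$ with $e^u \to \infty$ as $u \to \infty$, the asymptotic relation $\Card\{\cdots \ge 1/T\}/(CT^{\delta}) \to 1$ as $T\to\infty$ transfers to
\begin{equation*}
  \frac{N(u)}{C e^{\delta u}} \to 1 \qquad (u \to \infty),
\end{equation*}
which is \eqref{eq:counting-U} with the same $C$ and $\delta$.

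The only step needing care is the second one, namely that the index set of the quoted count is the set of classes in $\Theta''$ rather than all of $\Theta$. If one instead counted every element of $\Theta\setminus\{\Id\}$, the inverse symmetry would exactly double the count, and the constant would change by a factor of~$2$. That change would not affect $\delta$ or any of the subsequent uses in \cref{thm:error} and \cref{prop:alg}. I will therefore state explicitly which convention \cref{thm:pollicott} uses, so that the constant in \eqref{eq:counting-U} is literally Pollicott's~$C$.
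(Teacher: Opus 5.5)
Your proposal is correct and follows the paper's own argument: identify the two sets through $\abs{R_{\zeta,\alpha}(\theta)-1}=e^{-U_{\zeta,\alpha}(\theta)}$ and substitute $T=e^{u}$ in \eqref{eq:pollicott-count}. Your observation that the quoted count runs over $\Theta$ while $N$ runs over $\Theta''$ is a genuine point that the paper's one-line derivation passes over: since $R_{\zeta,\alpha}(\theta^{-1})=R_{\zeta,\alpha}(\theta)$, the constant would be $C/2$ unless Pollicott's count is taken per inverse class. There is one slip in your side check. Because $R(a,b,c,d)-1=(a-b)(c-d)/((c-b)(d-a))$, you get $R_{\zeta,\alpha}(\theta)=1$ iff $\zeta=\alpha$, whereas $\theta(\zeta)=\zeta$ or $\theta(\alpha)=\alpha$ is the condition for the denominator of $R_{\zeta,\alpha}(\theta)$ to vanish. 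So your limit-set argument proves that $R_{\zeta,\alpha}(\theta)$ is finite, and $R_{\zeta,\alpha}(\theta)\neq 1$ instead follows from $\zeta\neq\alpha$; the set identity holds regardless if you set $U=+\infty$ when $R=1$.
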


This lemma is the only counting input used in the following: \cref{thm:error} expresses the omitted tail as a Stieltjes integral against~$N$, and \cref{prop:alg} bounds the number of words visited when applying \cref{alg:enumerate}.
The exponent $\delta$ is the Hausdorff dimension of the limit set of~$\Theta$; numerical methods for approximating such dimensions for Kleinian groups go back at least to Jenkinson and Pollicott~\cite{jenkinson2002Calculating}.
In \cref{sec:experiments}, we estimate $\delta$ from the spectral radius of a transfer operator and compare the resulting rate $1 - \delta$ with observed relative errors.

\subsection{Asymptotic relative-error bound}
\label{sec:error-statement}

\begin{theorem}[Asymptotic relative-error bound]
  \label{thm:error}
  Let $\delta \in (0, 1)$ be the Hausdorff dimension of the limit set of $\Theta$ and let $C > 0$ be the constant of Pollicott's counting theorem (\cref{thm:pollicott}).
  Then,
  \begin{equation}
    \frac{\abs{\omega_{U_{\max}}(\zeta, \alpha) -
        \omega(\zeta, \alpha)}}
    {\abs{\omega(\zeta, \alpha)}}
    \le
    \ab(1 + o(1))
    \frac{C \delta}{1 - \delta} e^{-(1 - \delta)U_{\max}}
    \qquad \text{as } U_{\max} \to \infty.
    \label{eq:error-bound}
  \end{equation}
\end{theorem}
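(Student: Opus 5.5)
We plan to write the ratio of the truncated product to the full product as the tail product over omitted factors:
\begin{equation*}
  \frac{\omega(\zeta, \alpha)}{\omega_{U_{\max}}(\zeta, \alpha)}
  = \prod_{\theta \in \Theta'' \setminus \Theta''_{\le U_{\max}}} \ab(1 + \epsilon_\theta),
  \qquad \abs{\epsilon_\theta} = e^{-U(\theta)},
\end{equation*}
using \eqref{eq:omega-as-perturbation} and \eqref{eq:term-as-potential}. The relative error is then $\abs{\omega_{U_{\max}}/\omega - 1} = \abs{\prod(1+\epsilon_\theta)^{-1} - 1}$. We will control this by the tail sum
\begin{equation*}
  S(U_{\max}) \defeq \sum_{\theta \in \Theta'':\, U(\theta) > U_{\max}} e^{-U(\theta)}.
\end{equation*}
Once we show $S \to 0$, the elementary estimate $\abs{\prod(1+\epsilon_\theta)^{\pm1} - 1} \le e^{\sum\abs{\epsilon_\theta}(1+o(1))} - 1 = S(1+o(1))$ applies, valid since all $\abs{\epsilon_\theta}\le e^{-U_{\max}}$ are uniformly small. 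This gives relative error $\le (1+o(1))S(U_{\max})$. The same estimate shows absolute convergence of the full product, so $\omega \ne 0$ and the ratio is well defined.

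The main step is the asymptotic evaluation of $S$ as a Stieltjes integral against the counting function of \cref{lem:counting}:
\begin{equation*}
  S(U_{\max}) = \int_{(U_{\max}, \infty)} e^{-u}\, \dl{N}(u).
\end{equation*}
We integrate by parts on $[U_{\max}, V]$ and let $V\to\infty$. The boundary term $e^{-V}N(V) \sim Ce^{-(1-\delta)V}$ vanishes precisely because $\delta<1$. This gives
\begin{equation*}
  S(U_{\max}) = -e^{-U_{\max}}N(U_{\max}) + \int_{U_{\max}}^\infty e^{-u}N(u)\,\dl{u}.
\end{equation*}
Substituting $N(u) = (1+o(1))Ce^{\delta u}$ uniformly for $u \ge U_{\max}$, the integral becomes $(1+o(1))\frac{C}{1-\delta}e^{-(1-\delta)U_{\max}}$, and the boundary term is $-(1+o(1))Ce^{-(1-\delta)U_{\max}}$. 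Combining,
\begin{equation*}
  S(U_{\max}) = (1+o(1))\,C\ab(\tfrac{1}{1-\delta} - 1)e^{-(1-\delta)U_{\max}} = (1+o(1))\tfrac{C\delta}{1-\delta}e^{-(1-\delta)U_{\max}}.
\end{equation*}

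The delicate point is the subtraction of two terms of the same order, $\frac{C}{1-\delta}$ and $C$. Their $o(1)$ errors do not combine into a relative $o(1)$ of the difference automatically. We will handle this by writing the $o(1)$ terms explicitly as $\varepsilon(U_{\max}) = \sup_{u\ge U_{\max}}\abs{N(u)e^{-\delta u}/C - 1} \to 0$. Each term is then off by at most $\varepsilon \cdot C e^{-(1-\delta)U_{\max}}\cdot O(1/(1-\delta))$. Since $\delta > 0$ is fixed, the limit $C\delta/(1-\delta)$ is positive, so the absolute error is still $o(1)$ relative to the main term.

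We also need to match the conventions of \cref{thm:pollicott}, which counts over $\Theta$ rather than $\Theta''$. \Cref{lem:counting} has already absorbed this (with $C$ as given there), and we simply invoke it. The other ingredient needing care is the product-to-sum step: we must use $\abs{\log(1+\epsilon)} \le \abs{\epsilon}(1+O(\abs{\epsilon}))$ for complex $\epsilon$ with a consistent branch, and the identity $e^{x}-1 = x(1+o(1))$ as $x\to0$. Both are routine once $S\to0$ is known.
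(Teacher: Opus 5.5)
Your proposal is correct and follows the paper's proof essentially step for step: you write the relative error through the tail product over omitted factors, bound it by $(1+o(1))$ times the tail sum, and evaluate that sum as a Stieltjes integral against $N$ from \cref{lem:counting}, integrating by parts with the boundary term at infinity vanishing because $\delta<1$. Your two refinements are sound and only make the same argument more rigorous. Bounding $\abs{\prod(1+\epsilon_\theta)^{-1}-1}$ directly supplies the lower bound on the tail product that the supplement glosses with the upper bound $\le e^{s}$. The explicit uniform $\varepsilon(U_{\max})$ control of the subtraction is legitimate because $\delta>0$ makes the two leading constants distinct.
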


\begin{proof}[Sketch]
  A standard product-bound inequality (\cref{sup:sec:error-proof}) yields
  \begin{equation}
    \frac{\abs{\omega_{U_{\max}} - \omega}}{\abs{\omega}}
    \le
    \ab(1 + o(1))
    \sum_{U_{\zeta, \alpha}(\theta) > U_{\max}}\abs{R_{\zeta, \alpha}(\theta) - 1},
    \label{eq:err-as-tail}
  \end{equation}
  and so it suffices to estimate the tail sum.
  Writing this as a Stieltjes integral against $N$ from \cref{lem:counting} via~\eqref{eq:term-as-potential} and integrating by parts, we obtain
  \begin{equation}
    \sum_{U_{\zeta, \alpha}(\theta) > U_{\max}}\abs{R_{\zeta, \alpha}(\theta) - 1}
    =
    \int_{U_{\max}}^{\infty} e^{-u} \dl{N}(u)
    \sim
    \frac{C\delta}{1 - \delta} e^{-(1 - \delta)U_{\max}},
    \label{eq:tail-sum}
  \end{equation}
  where the asymptotic uses $N(u) \sim C e^{\delta u}$.
  Substituting~\eqref{eq:tail-sum} into~\eqref{eq:err-as-tail} yields~\eqref{eq:error-bound}.
\end{proof}

In words, the relative error of potential truncation decays exponentially in the cutoff~$U_{\max}$ at rate~$1 - \delta$, where $\delta$ is the Hausdorff dimension of the limit set of~$\Theta$.
Larger~$\delta$ corresponds to more crowded hole configurations, and therefore yields a slower guaranteed decay.

\subsection{Complexity of the enumeration algorithm}
\label{sec:complexity}

\begin{proposition}[Correctness and efficiency of Algorithm~\ref{alg:enumerate}]%
  {Correctness and efficiency of \cref{alg:enumerate}}
  \label{prop:alg}
  Suppose $G$ contains no negative cycle.
  Then, \cref{alg:enumerate}, with edge weights from~\eqref{eq:deltajk-lb} and the pruning rule of~\cref{sec:wmin}, returns every $\theta \in \Theta''_{\le U_{\max}}$ exactly once, and the number of words it visits is $O\ab(e^{\delta(U_{\max} - W_{\min})})$.
\end{proposition}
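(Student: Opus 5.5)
The proof splits into correctness and complexity, and both rest on the descendant bound~\eqref{eq:descendant-lower}. Since $G$ has no negative cycle, $W_{\min}\in(-\infty,0]$ is finite. \Cref{thm:increment} together with~\eqref{eq:descendant-walk-weight} then gives $U(\psi_{\ell'})\ge U(\psi_\ell)+W_{\min}$ for every descendant of every non-identity vertex $\psi_\ell$.

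\emph{Correctness.} Take $\theta\in\Theta''_{\le U_{\max}}$ with reduced word $\theta=\theta_{j_\ell}\cdots\theta_{j_1}$, and let $\psi_i=\theta_{j_i}\cdots\theta_{j_1}$ be its prefixes. Each $\theta=\psi_\ell$ is a descendant of $\psi_i$, so $U(\psi_i)\le U(\theta)-W_{\min}\le U_{\max}-W_{\min}$ for every $1\le i\le\ell$. Hence no prefix is pruned, and by induction on $i$ each $\psi_i$ is pushed (the root expands all $2M$ generators) and later popped. Because $U(\theta)\le U_{\max}$, the word $\theta$ is appended when popped.

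Uniqueness follows from the tree structure. Each reduced word has a unique parent, so it is pushed at most once, and the stack-based traversal of a tree never revisits a vertex. Of the two words $\theta$ and $\theta^{-1}$, both of which may appear, only the lexicographic representative is appended by the convention stated after \cref{alg:enumerate}. Conversely, anything appended satisfies $U\le U_{\max}$ and lies in $\Theta''$. Thus the output is exactly $\Theta''_{\le U_{\max}}$, each element once.

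\emph{Complexity.} Partition the visited (popped) words into two classes:
\begin{itemize}
\item $A$: the root together with the words $\theta\neq\Id$ satisfying $U(\theta)\le U_{\max}-W_{\min}$;
\item $B$: the words popped but pruned.
\end{itemize}
A non-root word is pushed only when its parent lies in $A$, and each vertex has at most $2M$ children, so $\Card B\le 2M\,\Card A$. It therefore suffices to bound $\Card A$. Every $\theta\in A\setminus\{\Id\}$ lies in $\Theta\setminus\{\Id\}$ with $U(\theta)\le U_{\max}-W_{\min}$. Since $U(\theta^{-1})=U(\theta)$, because the cross-ratio factor in~\eqref{eq:sk-prime} is symmetric under $\theta\mapsto\theta^{-1}$, each class in $\Theta''$ contributes at most two words. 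This gives $\Card A\le 1+2N(U_{\max}-W_{\min})$.

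By \cref{lem:counting}, $N(u)\sim Ce^{\delta u}$, so $N(u)\le 2Ce^{\delta u}$ for $u\ge u_0$, and $N(u)\le N(u_0)$ for smaller $u$. Hence the number of visited words is $O(e^{\delta(U_{\max}-W_{\min})})$, with a constant depending only on $M$, $C$ and the configuration, which is the claimed bound.

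The main delicate point is the symmetry $U(\theta^{-1})=U(\theta)$, which is needed to pass from $\Theta\setminus\{\Id\}$ to the count over $\Theta''$. To verify it, replace $(\zeta,\alpha,\theta(\zeta),\theta(\alpha))$ by $(\zeta,\alpha,\theta^{-1}(\zeta),\theta^{-1}(\alpha))$ and apply $\theta$ using Möbius invariance of the cross-ratio. This yields the cross-ratio with the pairs swapped, and $R(a,b,c,d)=R(c,d,a,b)$, so the factor is unchanged. Alternatively, Pollicott's count in \cref{thm:pollicott} is already stated over $\Theta$, so it can be applied directly to $\Theta\setminus\{\Id\}$ and absorb the factor $2$ into the constant.
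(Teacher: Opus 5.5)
Your proof is correct and follows the paper's argument. Correctness comes from applying the descendant bound \eqref{eq:descendant-lower} to every prefix of $\theta$, so no ancestor is pruned, and the visit count comes from applying \cref{lem:counting} to the sublevel set $\{U \le U_{\max} - W_{\min}\}$. You also add two pieces of bookkeeping that the paper skips when it asserts that every visited word lies in that sublevel set: charging each popped-but-pruned word to its expanded parent (at most $2M$ children each), and using $U(\theta^{-1}) = U(\theta)$ to pass from words in $\Theta\setminus\{\Id\}$ to classes in $\Theta''$.
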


\begin{proof}
  \emph{Correctness.} Let $\theta = \theta_{j_\ell} \cdots \theta_{j_1} \in \Theta''_{\le U_{\max}}$, so that $U_{\zeta, \alpha}(\theta) \le U_{\max}$.
  Every ancestor $\psi$ of $\theta$ visited by the depth-first search is a prefix of the same reduced word, and hence~\eqref{eq:descendant-lower} gives $U_{\zeta, \alpha}(\theta) \ge U_{\zeta, \alpha}(\psi) + W_{\min}$.
  Therefore, $U_{\zeta, \alpha}(\psi) \le U_{\max} - W_{\min}$ and no ancestor is pruned, so $\theta$ is reached.
  Because $W_{\min} \le 0$, any visited word with $U_{\zeta, \alpha}(\theta) \le U_{\max}$ satisfies $U_{\zeta, \alpha}(\theta) \le U_{\max} - W_{\min}$ and is appended to~$\mathcal{O}$.
  Retaining one representative per inverse pair yields each element of $\Theta''_{\le U_{\max}}$ exactly once.
  \emph{Efficiency.} Every visited word lies in $\{\theta: U_{\zeta, \alpha}(\theta) \le U_{\max} - W_{\min}\}$, whose cardinality is $O\ab(e^{\delta(U_{\max} - W_{\min})})$ by~\eqref{eq:counting-U}.
\end{proof}

When all edge weights of $G$ are nonnegative, $W_{\min} = 0$ as in \cref{sec:wmin}, and the visit count is $O\ab(e^{\delta U_{\max}})$, matching $\Card \ab(\Theta''_{\le U_{\max}})$ up to a constant factor.
For negative weights, the extra factor $e^{- \delta W_{\min}}$ depends only on~$G$, not on~$U_{\max}$.

% ============================================================
\section{Numerical experiments}
\label{sec:experiments}

We test the increment bounds, the enumeration algorithm, and the relative-error analysis on three circular-hole configurations of increasing geometric difficulty (\cref{fig:exp-circles}), comparing potential truncation with classical level truncation.
\Cref{sec:exp-skprime} describes an external comparison with SKPrime~\cite{kropf2016SKPrime,crowdy2016Schottky}.
In \cref{sec:exp-heatmap},~$\zeta$ is varied over~$\Domain$ at fixed~$\alpha$ to compare the two truncations globally, rather than only for fixed~$(\zeta, \alpha)$.

\subsection{Experimental setup}
\label{sec:exp-setup}

\begin{itemize}
  \item \textbf{Case~1 (well separated):} $M = 2$ with centers $\delta_1 = -0.4$, $\delta_2 = 0.4$ and radii $q_1 = q_2 = 0.15$, far from each other and from the unit circle; evaluation at $\zeta = 0.5\iunit$, $\alpha = -0.5\iunit$ (\cref{fig:exp-case-1}).
  \item \textbf{Case~2 (closely packed):} $M = 2$ with centers $\delta_1 = -0.155$, $\delta_2 = 0.155$ and radii $q_1 = q_2 = 0.15$; evaluation at $\zeta = 0.5\iunit$, $\alpha = -0.5\iunit$ (\cref{fig:exp-case-2}).
  \item \textbf{Case~3 (three closely packed holes):} $M = 3$ with centers $\delta_1 = 0$, $\delta_2 = 0.575$, $\delta_3 = 0.34\iunit$ and radii $q_1 = 0.15$, $q_2 = 0.385$, $q_3 = 0.15$; evaluation at $\zeta = 0.6\iunit$, $\alpha = -0.5\iunit$ (\cref{fig:exp-case-3}).
\end{itemize}
Case~1 is a regime in which level truncation is already efficient.
Cases~2 and~3 are configurations for which short words need not produce small cross-ratio terms, so that level ordering correlates poorly with term size.
For each configuration, we compute a reference value $\omega_{\mathrm{ref}}$ through potential truncation at a fixed large cutoff $U_{\mathrm{ref}}$.
We take $U_{\mathrm{ref}} = 34$ in Cases~1 and~2, and set $U_{\mathrm{ref}} = 20$ in Case~3 because the number of retained factors grows rapidly with~$U_{\max}$.
Throughout this section, we report the relative error against~$\omega_{\mathrm{ref}}$ rather than the unavailable exact value~$\omega$,
\begin{equation}
  \text{relative error}
  = \frac{\abs{\omega_{\mathrm{approx}} - \omega_{\mathrm{ref}}}}{\abs{\omega_{\mathrm{ref}}}},
  \label{eq:rel-error}
\end{equation}
where $\omega_{\mathrm{approx}}$ is the approximate value returned by the truncation scheme under test.
Level truncation is swept over $L_{\max}$ and potential truncation over $U_{\max}$, and wall-clock times are recorded together with this relative error.

\begin{figure}[t]
  \centering
  \subfloat[Case~1]{\label{fig:exp-case-1}%
    \includegraphics[width=0.325\textwidth]{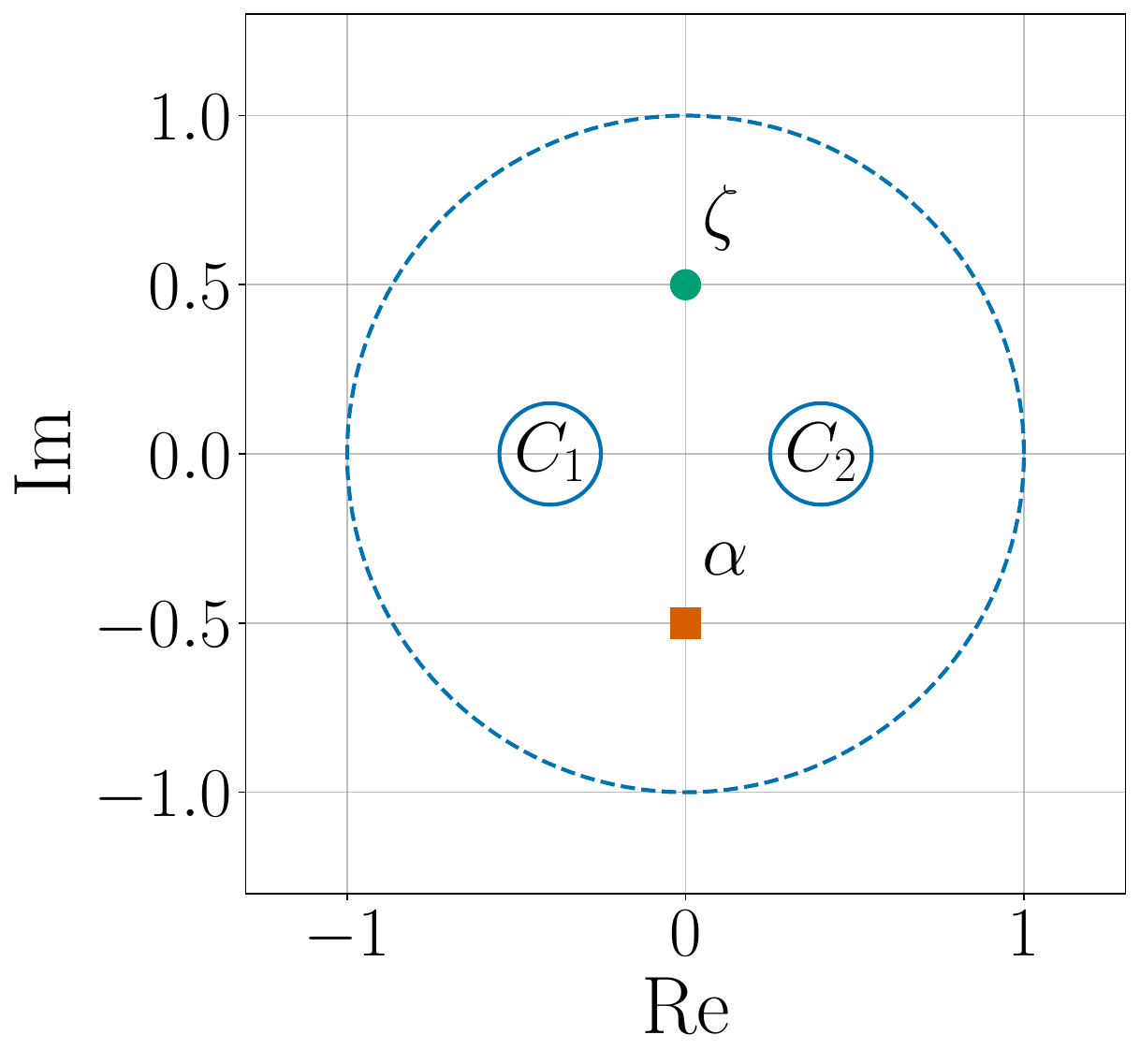}}%
  \hspace{0.01\textwidth}%
  \subfloat[Case~2]{\label{fig:exp-case-2}%
    \includegraphics[width=0.325\textwidth]{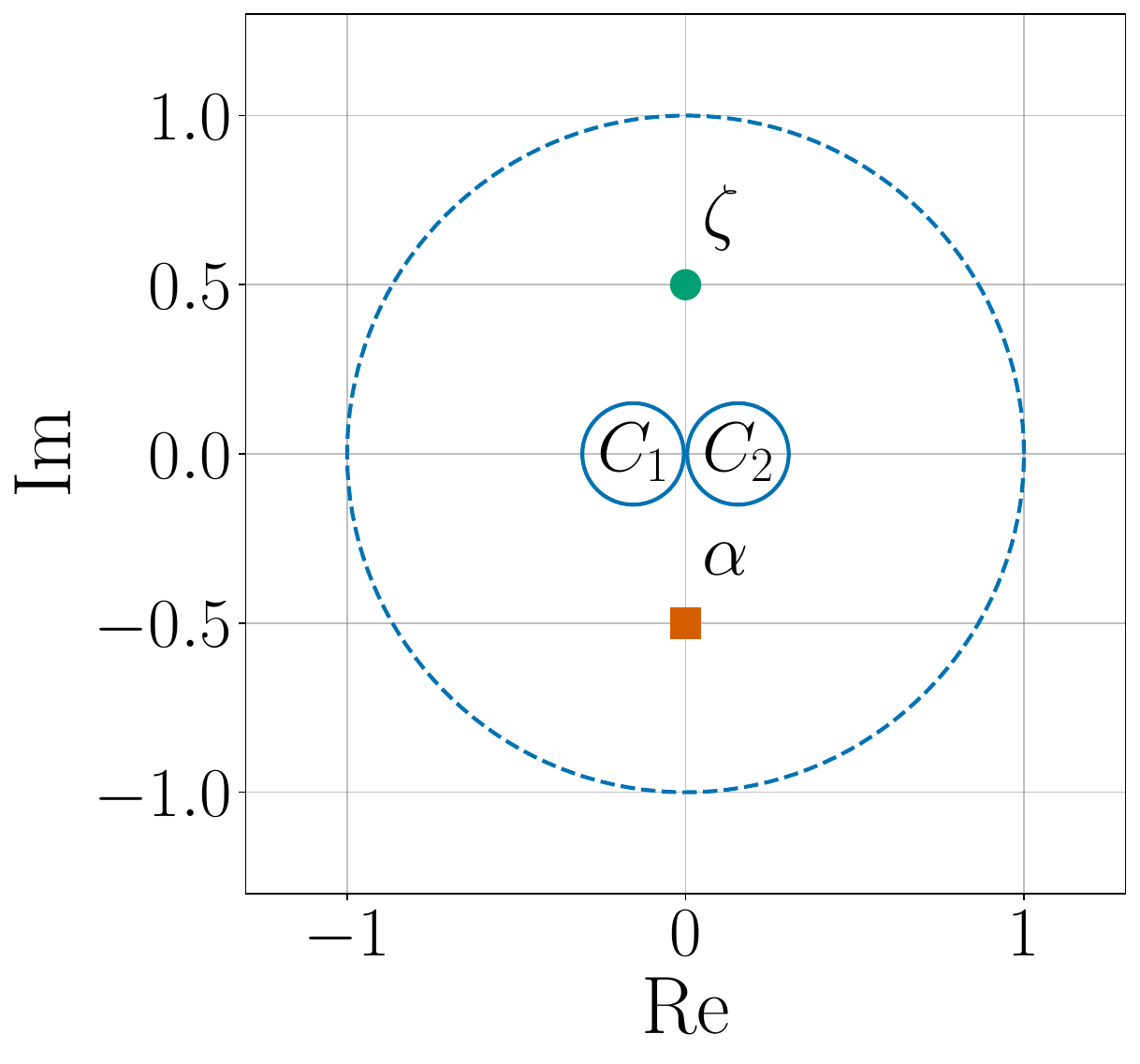}}%
  \hspace{0.01\textwidth}%
  \subfloat[Case~3]{\label{fig:exp-case-3}%
    \includegraphics[width=0.325\textwidth]{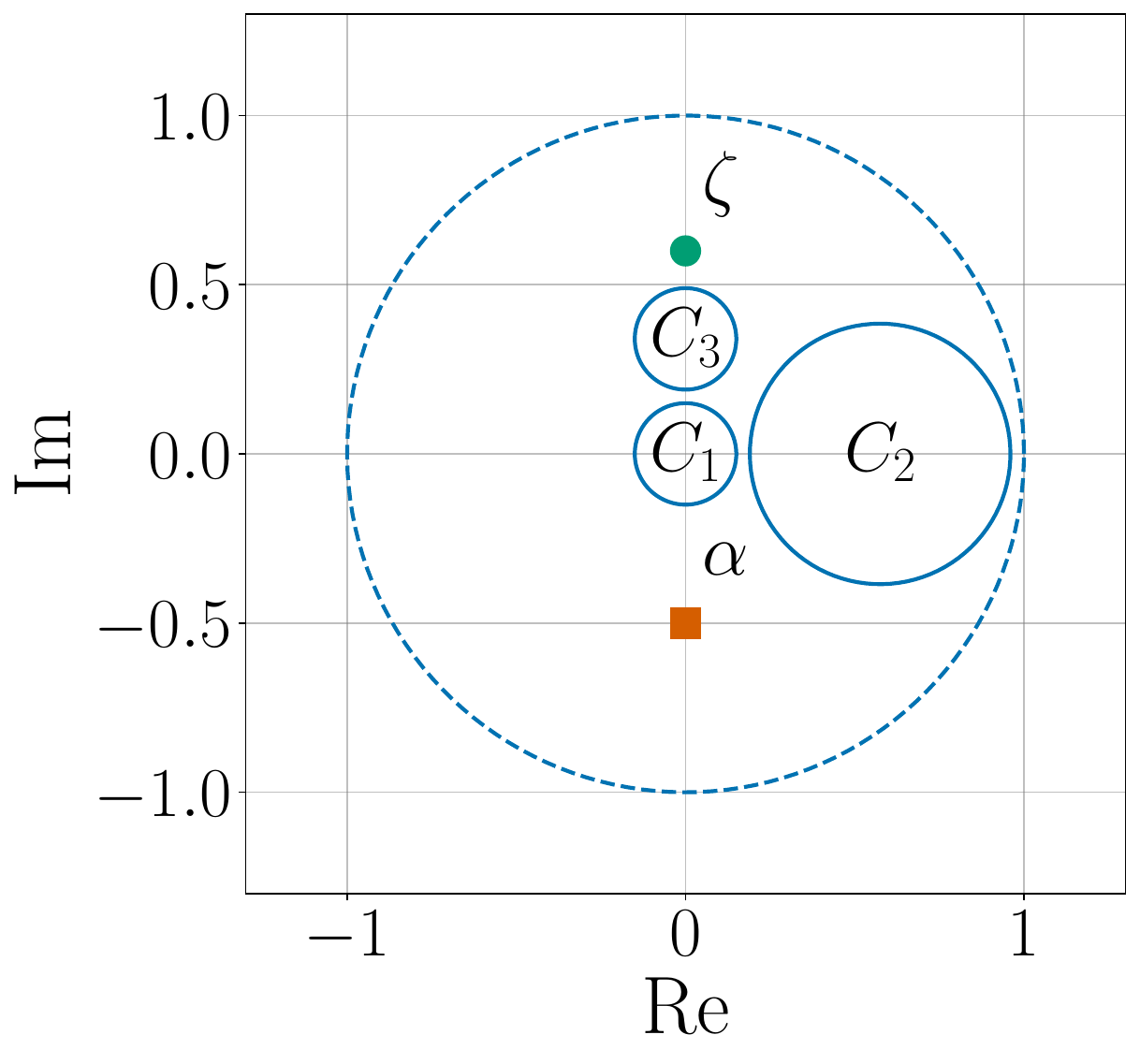}}
  \caption{Circular-hole configurations of increasing difficulty (evaluation points~$\zeta$,~$\alpha$ marked).
    Case~1: $\delta_1 = -0.4$, $\delta_2 = 0.4$, $q_1 = q_2 = 0.15$.
    Case~2: $\delta_1 = -0.155$, $\delta_2 = 0.155$, $q_1 = q_2 = 0.15$.
    Case~3: $\delta_1 = 0$, $\delta_2 = 0.575$, $\delta_3 = 0.34\iunit$, $q_1 = 0.15$, $q_2 = 0.385$, $q_3 = 0.15$.}
  \label{fig:exp-circles}
\end{figure}

\subsection{Uniform increment bounds}
\label{sec:exp-increment}

\begin{figure}[t]
  \centering
  \setlength{\abovecaptionskip}{4pt}%
  \includegraphics[width=\textwidth]{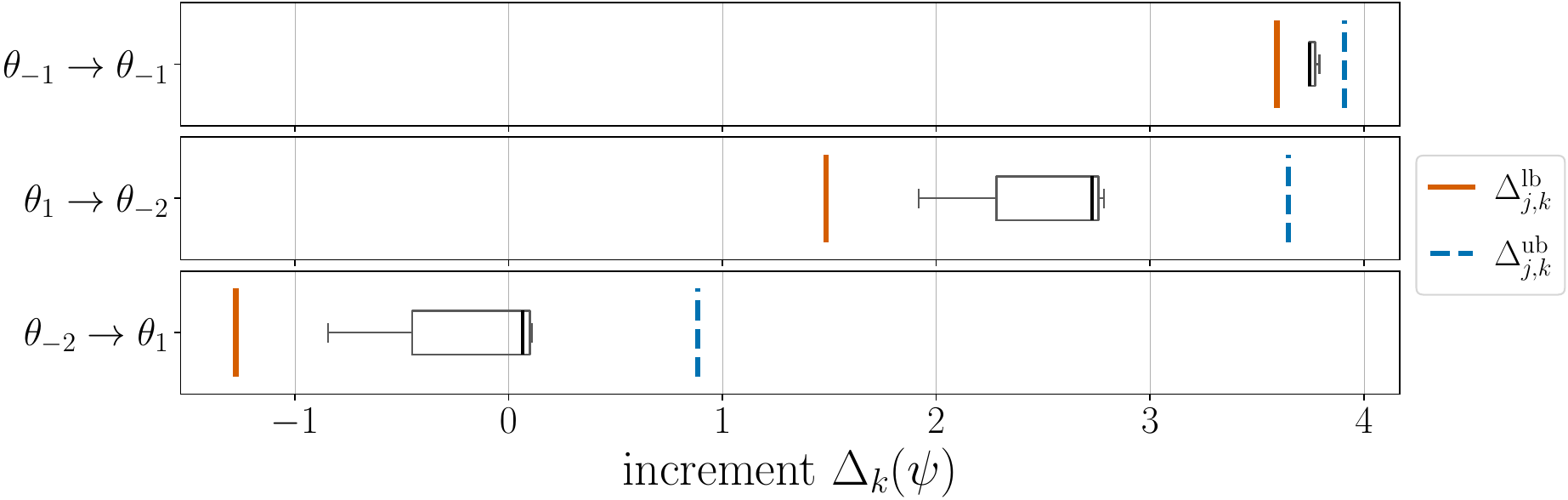}
  \caption{Potential increments $\Delta_k(\psi; \zeta, \alpha)$ in Case~2 over all reduced words with $\level(\theta) \le 7$.
    Panels: $(j, k) = (-1, -1)$, $(1, -2)$, $(-2, 1)$.
    Orange solid/blue dashed: uniform bounds $\Delta_{j, k}^{\lb}$ / $\Delta_{j, k}^{\ub}$.
    Numerical check of~\cref{thm:increment}: all samples lie within the bounds in every panel.}
  \label{fig:exp-increment-boxplot}
\end{figure}

\Cref{thm:increment} states that if a reduced word has leftmost factor $\theta_j$, then the increment $\Delta_k(\psi; \zeta, \alpha)$ obtained by prepending $\theta_k$ is bounded by constants that depend only on the pair $(j, k)$.
\Cref{fig:exp-increment-boxplot} checks this numerically on Case~2, in which consecutive generators interact strongly: the samples may be narrowly or broadly distributed and, for some pairs, negative.
The three panels show representative pairs: a narrow positive range for $(-1, -1)$, a wider positive range for $(1, -2)$, and a wide range including negative values for $(-2, 1)$.
In every panel, the samples lie between $\Delta_{j, k}^{\lb}$ and $\Delta_{j, k}^{\ub}$, and the spread within a pair is small compared with the gap between pairs.
This supports the use of the single lower bound $\Delta_{j, k}^{\lb}$ as the edge weight of~$G$ in \cref{sec:graph-G} and \cref{alg:enumerate}, in place of the $\psi$-dependent increment $\Delta_k(\psi; \zeta, \alpha)$.

\subsection{Pruning graph and relative error against runtime}
\label{sec:exp-results}

\begin{figure}[t]
  \centering
  \subfloat[Case~1]{\includegraphics[width=0.325\textwidth]{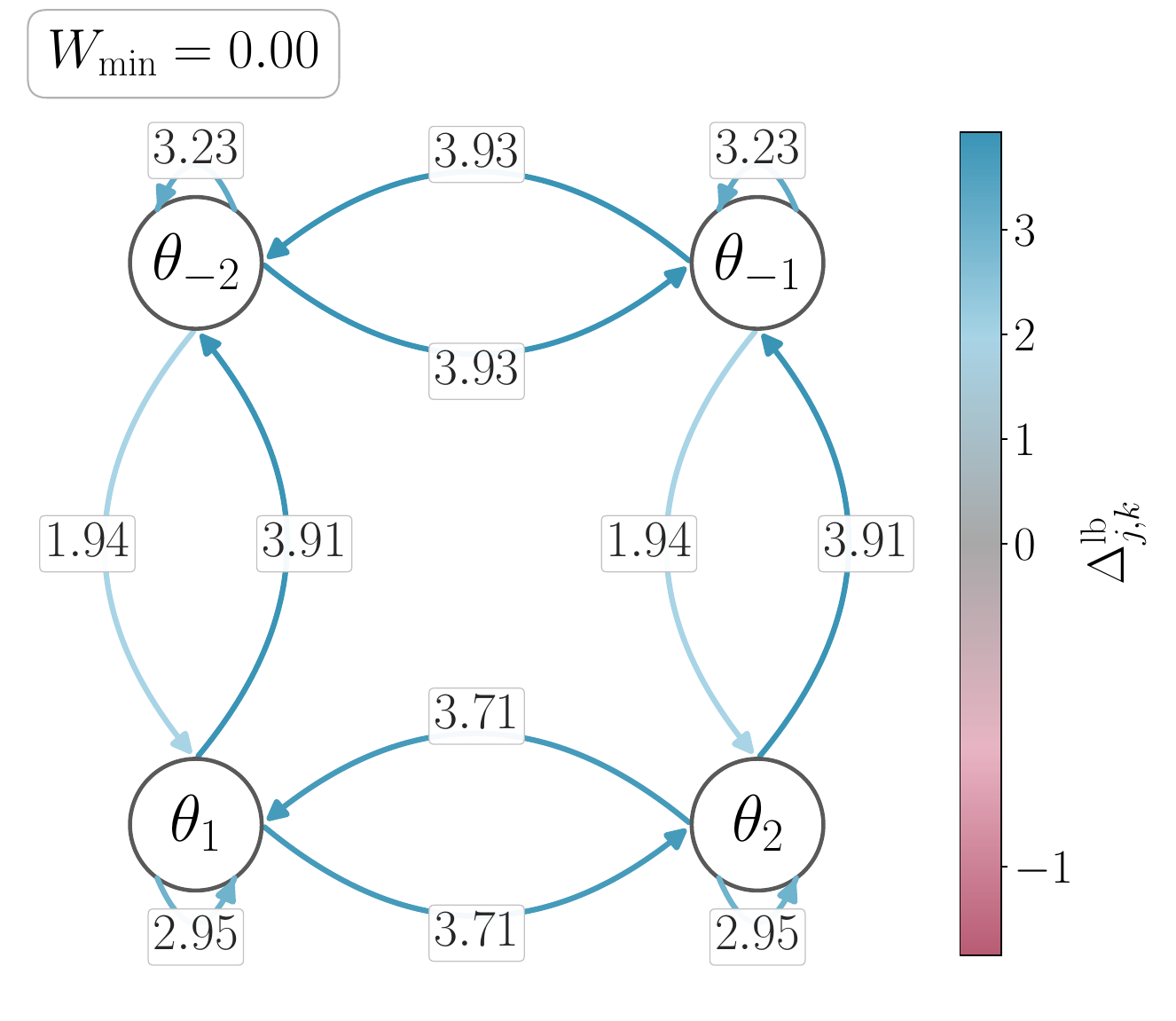}}%
  \hspace{0.01\textwidth}%
  \subfloat[Case~2]{\includegraphics[width=0.325\textwidth]{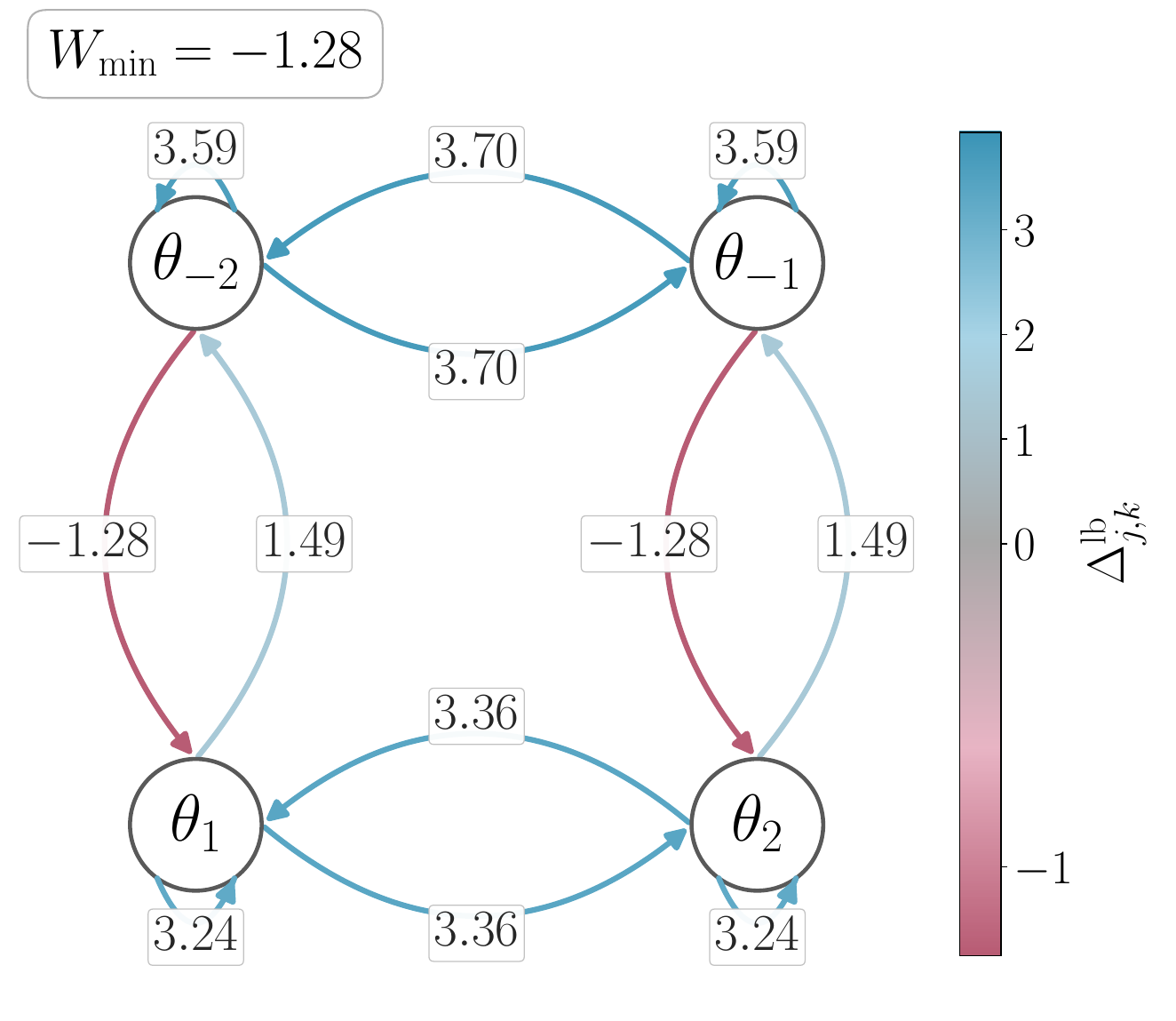}}%
  \hspace{0.01\textwidth}%
  \subfloat[Case~3]{\includegraphics[width=0.325\textwidth]{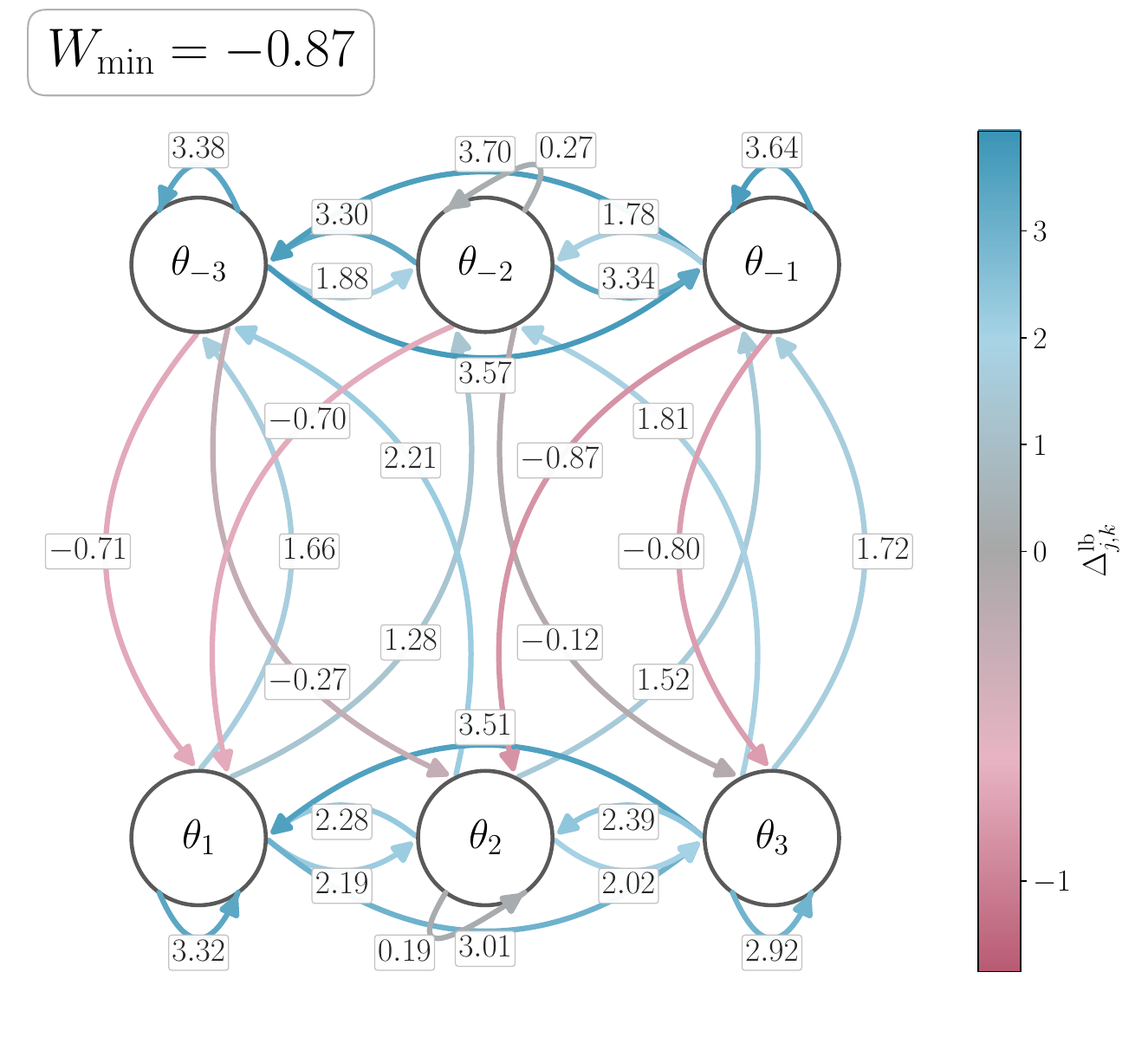}}
  \caption{Pruning graph~$G$ with edge weights $\Delta_{j, k}^{\lb}$ for Cases~1--3
    (blue: nonnegative; red: negative; shared color bar).
    Case~1: nonnegative weights only ($W_{\min} = 0$).
    Cases~2 and 3: negative edges present; $W_{\min}$ finite and no negative cycle in all three panels.}
  \label{fig:exp-pruning-graph}
\end{figure}

\Cref{fig:exp-pruning-graph} shows the edge weights used for subtree pruning in \cref{alg:enumerate}.
In Case~1, all displayed weights are nonnegative, so $W_{\min} = 0$ and the pruning threshold reduces to $U_{\max}$.
As the holes cluster, more edges become negative.
Case~3 has the densest set of negative weights.
In all three configurations, the Bellman--Ford algorithm yields a finite $W_{\min}$ and no negative cycle, which is the hypothesis of \cref{prop:alg} under which the algorithm is correct and visits $O\ab(e^{\delta(U_{\max} - W_{\min})})$ words.

\begin{figure}[t]
  \centering
  \setlength{\abovecaptionskip}{2pt}%
  \subfloat[Case~1]{\includegraphics[width=0.325\textwidth]{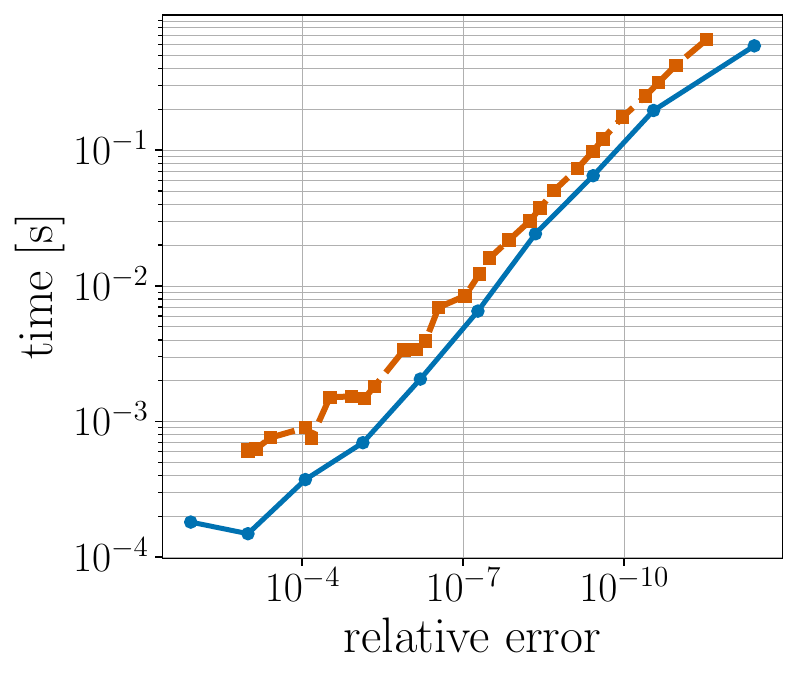}}%
  \hspace{0.01\textwidth}%
  \subfloat[Case~2]{\includegraphics[width=0.325\textwidth]{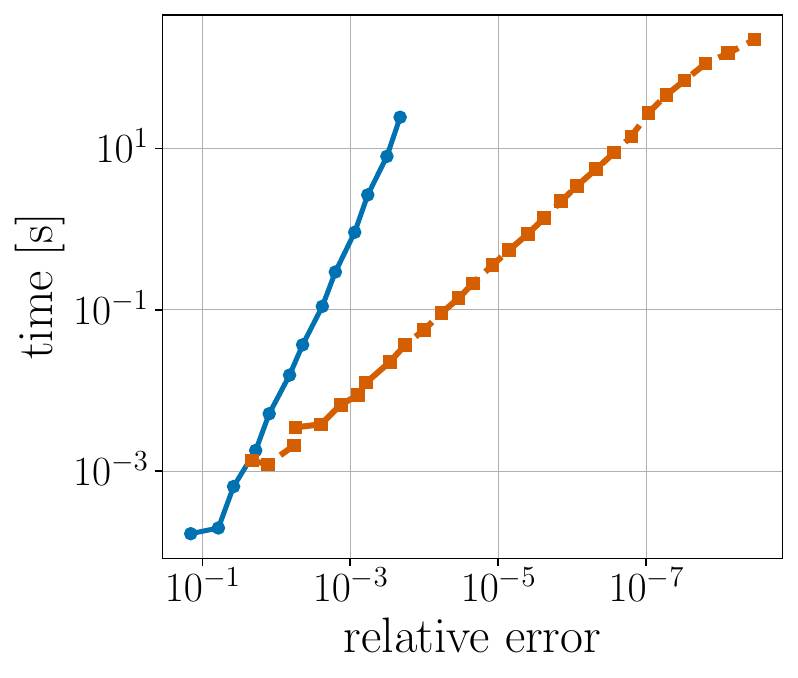}}%
  \hspace{0.01\textwidth}%
  \subfloat[Case~3]{\includegraphics[width=0.325\textwidth]{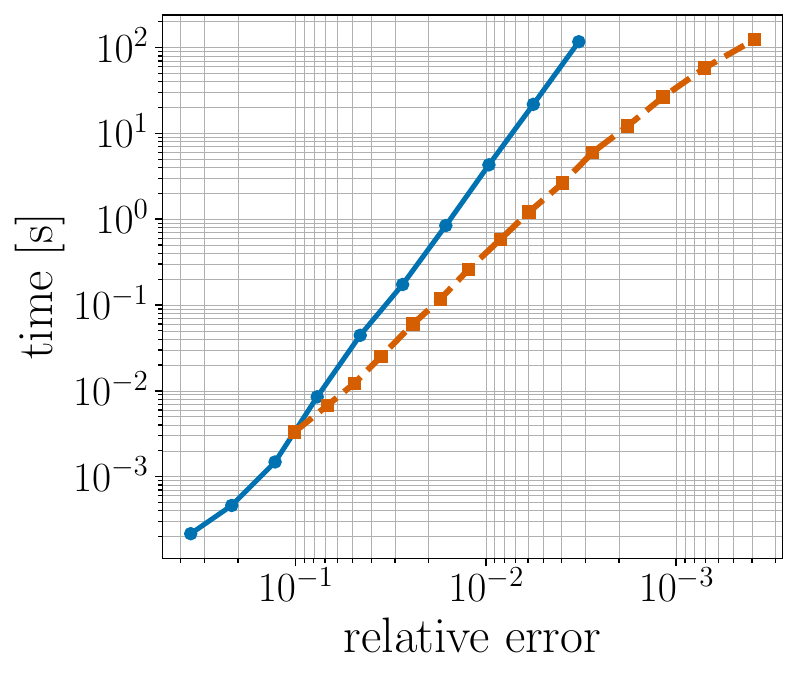}}\\[-0.15em]
  \includegraphics[width=0.58\textwidth]{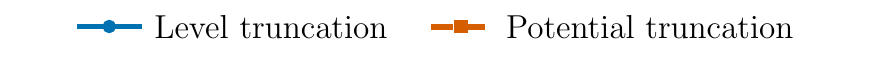}%
  \vspace{-0.6em}%
  \caption{Wall-clock time against relative error for level truncation and potential truncation (Cases~1--3).
    Each marker: one cutoff $L_{\max}$ or $U_{\max}$ (accuracy increases to the right).
    Case~1: the two methods are nearly comparable.
    Cases~2 and 3: potential truncation reaches the same error in substantially less time.}
  \label{fig:exp-runtime-error}
\end{figure}

\Cref{fig:exp-runtime-error} records the wall-clock time against relative error for both truncation schemes.
For potential truncation, each marker uses the product of every factor collected by the pruned depth-first search of~\cref{alg:enumerate}.
The visited set can include words with $U_{\zeta, \alpha}(\theta) > U_{\max}$, so the product can be more accurate at a fixed~$U_{\max}$.
In Case~1, both methods converge quickly and potential truncation is only marginally faster at a given accuracy, as expected when level ordering already tracks term size.
In Cases~2 and~3, which have larger limit-set dimensions than Case~1, potential truncation reaches the same relative error substantially faster.
Thus, the practical gain is clearest when level ordering correlates poorly with term size.

\subsection{Asymptotic error bound}
\label{sec:exp-asymptotic}

The empirical Hausdorff dimensions estimated from the spectral radius of the transfer operator are $\delta \approx 0.31$, $0.46$, and $0.77$ for Cases~1--3, following Jenkinson and Pollicott~\cite{jenkinson2002Calculating}.
The decay rates predicted by \cref{thm:error} are therefore $1 - \delta \approx 0.69$, $0.54$, and $0.23$, so Case~3 is expected to converge most slowly.

\begin{table}[t]
  \caption{Relative error of the partial product~\eqref{eq:trunc-def} and the asymptotic bound~\eqref{eq:error-bound} at selected~$U_{\max}$ (Cases~1--3).
    Here, $C$,~$\delta$ from $N(u)\sim C e^{\delta u}$; $N(U_{\max})$ is the number of retained factors.
    Cases~1 and 2: error tracks the bound.
    Case~3: slower decay; bound valid but conservative at moderate~$U_{\max}$.}
  \label{tab:exp-potential-error}
  \centering
  \footnotesize
  \setlength{\tabcolsep}{3.5pt}%
  \renewcommand{\arraystretch}{0.95}%
  % Cases 1--3: well_separated, closely_packed, three_closely_packed (data/20260620).
\begin{tabular}{@{}lllrrll@{}}
\toprule
Case & $C$ & $\delta$ & $U_{\max}$ & $N(U_{\max})$ & rel.\ error & asymp.\ bound \\
\midrule
Case~1 & $0.74$ & $0.31$ & $10$ & $16$ & $3.00\times 10^{-4}$ & $3.16\times 10^{-4}$ \\
 &  &  & $20$ & $324$ & $3.40\times 10^{-7}$ & $3.05\times 10^{-7}$ \\
 &  &  & $30$ & $6957$ & $2.93\times 10^{-10}$ & $2.95\times 10^{-10}$ \\
\addlinespace
Case~2 & $0.71$ & $0.46$ & $10$ & $63$ & $3.41\times 10^{-3}$ & $2.85\times 10^{-3}$ \\
 &  &  & $20$ & $7321$ & $1.37\times 10^{-5}$ & $1.33\times 10^{-5}$ \\
 &  &  & $30$ & $763223$ & $5.84\times 10^{-8}$ & $6.17\times 10^{-8}$ \\
\addlinespace
Case~3 & $0.57$ & $0.77$ & $10$ & $1166$ & $2.60\times 10^{-2}$ & $1.81\times 10^{-1}$ \\
 &  &  & $14$ & $25850$ & $6.25\times 10^{-3}$ & $7.09\times 10^{-2}$ \\
 &  &  & $18$ & $561496$ & $1.31\times 10^{-3}$ & $2.78\times 10^{-2}$ \\
\bottomrule
\end{tabular}

\end{table}

\Cref{tab:exp-potential-error} compares the observed relative error of the partial product~\eqref{eq:trunc-def} with the asymptotic bound~\eqref{eq:error-bound}.
With $C$ and $\delta$ fitted from $N(u)$, the last column is $\frac{C\delta}{1 - \delta} e^{-(1 - \delta)U_{\max}}$.
In Cases~1 and~2, the observed error tracks the bound closely over the tabulated range of~$U_{\max}$.
In Case~3, where $1 - \delta \approx 0.23$ is small, the bound remains an upper estimate, but is conservative at moderate~$U_{\max}$.

\subsection{Comparison with SKPrime}
\label{sec:exp-skprime}

\begin{table}[t]
  \caption{SKPrime (default setting) compared with potential truncation at selected~$U_{\max}$: Case~1 and a near unit circle configuration
    (left: geometry; $M = 2$, $\delta_1 = -0.3$, $\delta_2 = 0.75$, $q_1 = 0.15$, $q_2 = 0.24$).
    Potential: strict-product relative error and pruned-enumeration wall-clock time.
    SKPrime: single-point wall-clock time.
    Case~1: SKPrime already $\sim 10^{-12}$; potential truncation approaches this as $U_{\max}$ grows.
    Near unit circle: SKPrime remains at $\sim 10^{-4}$; potential truncation continues to smaller error at larger cost.}
  \label{tab:exp-skprime}
  \centering
  \begin{minipage}[c]{0.30\textwidth}
    \centering
    \includegraphics[width=\linewidth]{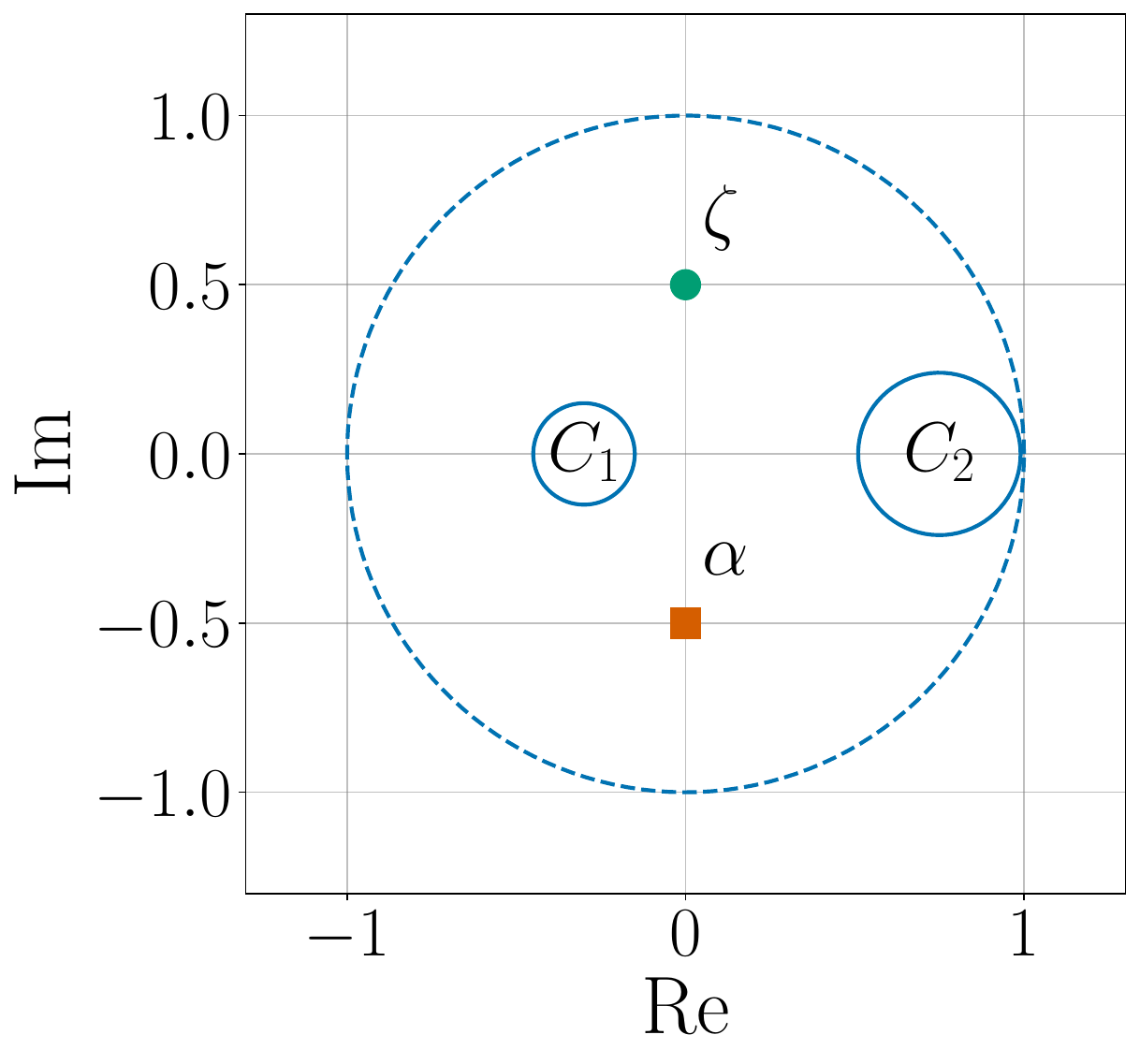}
  \end{minipage}%
  \hfill
  \begin{minipage}[c]{0.68\textwidth}
    \centering
    \footnotesize
    \setlength{\tabcolsep}{2.5pt}
    \resizebox{\linewidth}{!}{% SKPrime (default) vs potential truncation.
% Configs: well_separated (Case 1), near_boundary.
% Data stamp: 20260729
\begin{tabular}{lllrr}
\toprule
Config & Method & Truncation & Rel.\ error & Time (s) \\
\midrule
Case~1 & SKPrime & default setting & $3.89\times 10^{-12}$ & $0.154$ \\
 & Potential & $U_{\max} = 10$ & $3.00\times 10^{-4}$ & $1.51\times 10^{-3}$ \\
 & Potential & $U_{\max} = 20$ & $3.40\times 10^{-7}$ & $0.016$ \\
 & Potential & $U_{\max} = 30$ & $2.93\times 10^{-10}$ & $0.315$ \\
\addlinespace
Near unit circle & SKPrime & default setting & $3.66\times 10^{-4}$ & $0.225$ \\
 & Potential & $U_{\max} = 10$ & $1.18\times 10^{-3}$ & $5.12\times 10^{-3}$ \\
 & Potential & $U_{\max} = 20$ & $3.12\times 10^{-6}$ & $0.167$ \\
 & Potential & $U_{\max} = 30$ & $7.80\times 10^{-9}$ & $9.15$ \\
\bottomrule
\end{tabular}
}
  \end{minipage}
\end{table}

As an external baseline, we now compare against SKPrime~\cite{kropf2016SKPrime,crowdy2016Schottky}, which evaluates $\omega$ through boundary-value schemes on circular domains; see \cref{sec:intro-bvp}.
\Cref{tab:exp-skprime} lists the default SKPrime evaluation together with that of potential truncation at increasing~$U_{\max}$ for Case~1 and for a near-unit-circle configuration with one hole close to the unit circle.
The relative errors for both methods are measured against the same high-accuracy reference as in~\cref{sec:exp-setup}.
The advantage of potential truncation is the \emph{a priori} relative-error control of~\cref{thm:error}: a prescribed tolerance determines a sufficient~$U_{\max}$, and that accuracy is attained by taking the corresponding wall-clock time.
In Case~1, the default SKPrime error is already of order $10^{-12}$, and potential truncation approaches this regime as $U_{\max}$ grows.
In the near-unit-circle configuration, the default SKPrime error remains of order $10^{-4}$, whereas potential truncation continues to smaller errors for larger~$U_{\max}$ at greater cost.

\subsection{Spatial relative error for fixed \texorpdfstring{$\alpha$}{alpha}}
\label{sec:exp-heatmap}

The experiments described above use a fixed $(\zeta, \alpha)$ and vary the truncation parameters.
For this experiment, we instead fix~$\alpha$ and vary~$\zeta$ over~$\Domain$ to compare the global accuracy of level and potential truncation at a common term budget.
We take $M = 2$ with centers $\delta_1 = -0.5$, $\delta_2 = 0.7$ and radii $q_1 = q_2 = 0.2$, a geometry distinct from Cases~1--3, and set $\alpha = 0.2 - 0.4\iunit$.
At each sample, we form $\omega_{\mathrm{ref}}$ as in~\cref{sec:exp-setup}.
With $L_{\max}$ and $U_{\max}$ taken to be sufficiently large, we evaluate the relative error~\eqref{eq:rel-error} for both truncations using the partial products of the first $N = 1000$ factors in each ordering.

\begin{figure}[t]
  \centering
  \includegraphics[width=0.8\textwidth]{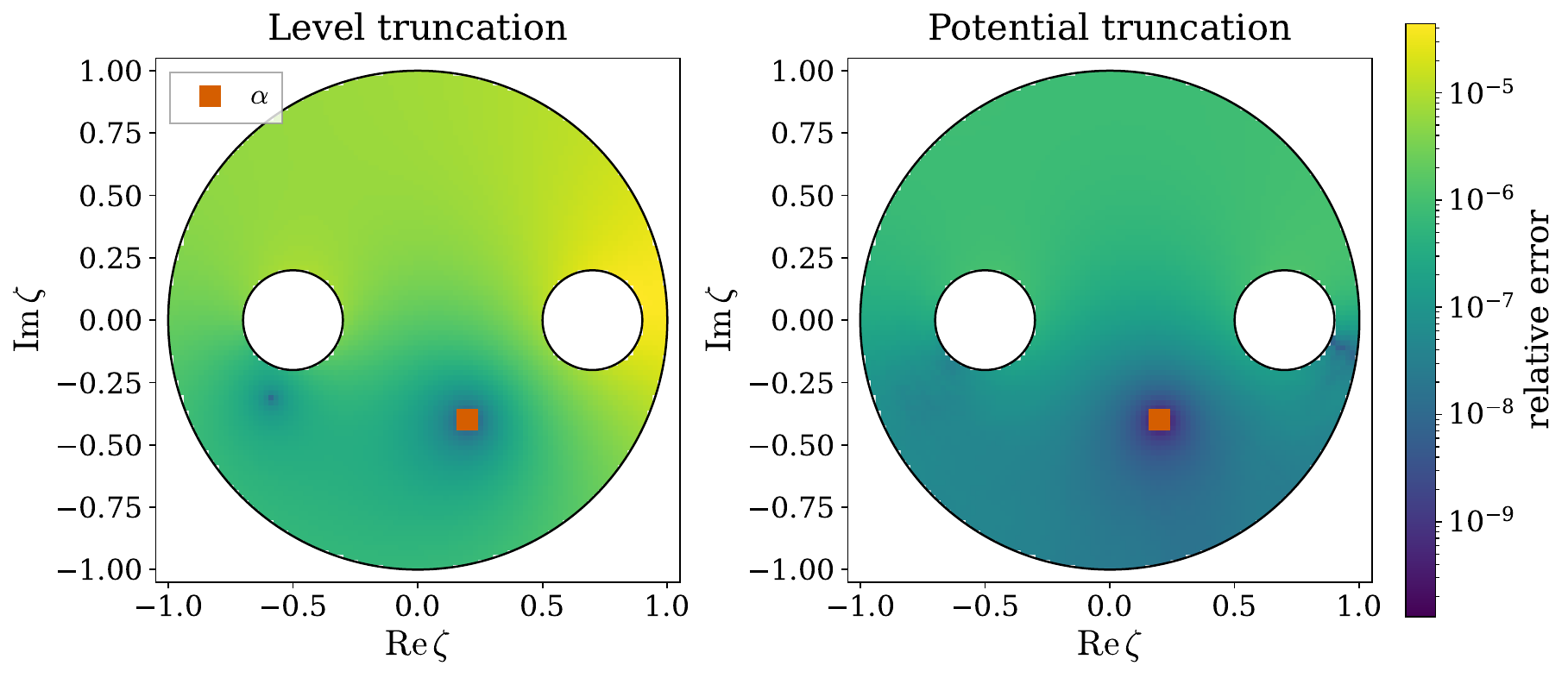}
  \caption{Relative error over $\zeta \in \Domain$ for fixed $\alpha = 0.2 - 0.4\iunit$
    ($M = 2$, $\delta_1 = -0.5$, $\delta_2 = 0.7$, $q_1 = q_2 = 0.2$; orange marker:~$\alpha$).
    Left: level truncation.
    Right: potential truncation.
    Both panels use the first $N = 1000$ factors in the corresponding ordering.
    Largest errors near holes and the unit circle; potential truncation more accurate throughout.}
  \label{fig:exp-heatmap}
\end{figure}

\Cref{fig:exp-heatmap} shows the resulting fields on a $100 \times 100$ grid over~$\Domain$.
Potential truncation is more accurate than level truncation throughout~$\Domain$, so the advantage seen for fixed~$(\zeta, \alpha)$ persists globally.
The largest errors of either method lie near the hole boundaries and the unit circle.

% ============================================================
\section{Conclusions and future work}
\label{sec:conclusion}

To truncate the infinite-product representation of the Schottky--Klein prime function, we proposed a potential truncation method based on the potential $U_{\zeta, \alpha}(\theta) \defeq - \log \abs{R_{\zeta, \alpha}(\theta) - 1}$, alongside a fast enumeration algorithm for its efficient evaluation.
\Cref{thm:increment} establishes uniform closed-form bounds on the potential increment, depending only on the prepended generator and the leftmost basic M\"obius transformation of the reduced word.
These bounds define the edge weights of the finite graph~$G$ that enables subtree pruning in the enumeration algorithm (\cref{alg:enumerate}).
\Cref{thm:error} provides an asymptotic relative-error bound with a decay rate of $1 - \delta$, and the same counting estimate governs the algorithmic complexity.
Numerical experiments confirm the method's theoretical properties---the absence of negative cycles in~$G$ and the tracking of the asymptotic error bound---while demonstrating superior efficiency and global accuracy over classical level truncation, especially in closely packed configurations.
Comparison with SKPrime~\cite{kropf2016SKPrime,crowdy2016Schottky} demonstrates that our approach provides rigorous \emph{a priori} error control, allowing the accuracy to be systematically improved by increasing the cutoff.

Several future directions remain, spanning algorithmic improvements, theoretical developments, and broader applications.
On the algorithmic side, sharpening the gap $\Delta_{j, k}^{\ub} - \Delta_{j, k}^{\lb}$ would tighten the edge weights of~$G$ and improve the pruning efficiency.
Additionally, characterizing negative cycles in~$G$ and developing a reliable fallback strategy when the pruning rule becomes vacuous are key practical challenges.
From a theoretical perspective, establishing a complete convergence theory for the infinite product---extending beyond classical geometric restrictions~\cite{baker1897Abels,belokolos1994Algebrogeometric,crowdy2007Computing} and our sufficient condition---remains an important goal, alongside investigating the dependence of the Hausdorff dimension $\delta$ on the hole geometry~\cite{jenkinson2002Calculating}.
Finally, extending the potential framework to secondary prime functions~\cite{vasconcelos2015Secondary} and to doubly periodic or multiply connected mapping problems~\cite{baddoo2019Periodic,krishnamurthy2023Nvortex} offers a promising direction for future research.

% -------------------- Acknowledgments --------------------
\section*{Acknowledgments}
The authors would like to thank Professor Takayasu Matsuo for his guidance and all the members of the laboratory for their support.
The authors also thank Professor Ken'ichiro Tanaka (Institute of Science Tokyo) for fruitful discussions and valuable advice.

% -------------------- Bibliography --------------------
\bibliographystyle{siamplain}
\bibliography{paper_skprime}

\end{document}

% --- supplement: supplement.tex ---

\maketitle

% ============================================================
\section{Computing the minimum walk weight by the Bellman--Ford algorithm}
\label{sup:sec:bellman-ford}

The minimum walk weight $W_{\min}$ of the finite weighted graph~$G$ in \cref{sec:wmin} is obtained by the standard Bellman--Ford algorithm, which also detects negative cycles.
The algorithm is stated here for completeness; \cref{alg:enumerate} uses the returned value~$W_{\min}$, or aborts if a negative cycle is detected.

\begin{algorithm}[t]
  \caption{Minimum walk weight on $G$ (Bellman--Ford).}
  \label{sup:alg:bellman-ford}
  \begin{algorithmic}[1]
    \STATE \textbf{Input:} lower bounds $\{\Delta_{j, k}^{\lb}\}$ of $G$, precomputed via~\eqref{eq:deltajk-lb}.
    \STATE \textbf{Output:} $W_{\min} \le 0$, or the flag \texttt{NegativeCycle}.
    \STATE $W[v]\Leftarrow 0$ for every $v \in V$.
    \FOR{$i = 1, \ldots, \abs{V} - 1$}
    \FORALL{$(\theta_j, \theta_k) \in E$}
    \STATE $W[\theta_k]\Leftarrow \min \ab(W[\theta_k], W[\theta_j] + \Delta_{j, k}^{\lb})$.
    \ENDFOR
    \ENDFOR
    \FORALL{$(\theta_j, \theta_k) \in E$}
    \IF{$W[\theta_j] + \Delta_{j, k}^{\lb} < W[\theta_k]$}
    \STATE \textbf{return} \texttt{NegativeCycle}.
    \ENDIF
    \ENDFOR
    \STATE \textbf{return} $W_{\min}\Leftarrow \min_{v \in V} W[v]$.
  \end{algorithmic}
\end{algorithm}

% ============================================================
\section{Detailed proofs of the cross-ratio decomposition}
\label{sup:sec:cross-ratio-proofs}

We complete \cref{sec:increment-why} by proving~\eqref{eq:increment-separated} and, in \cref{sup:sec:max-min-on-circle}, by computing $(\delta_{j, k}^{\zeta}, q_{j, k}^{\zeta})$ and $\eta_{j, k}^{\zeta} = q_{j, k}^{\zeta}/\abs{\delta_{j, k}^{\zeta}}$.

\subsection{From the definition to the two-image form}
\label{sup:sec:step1}

With $\zeta' = \psi(\zeta)$ and $\alpha' = \psi(\alpha)$ as in \cref{sec:increment-why}, we expand~\eqref{eq:increment-rewrite} using the standard identity
\begin{equation}
  R(a, b, c, d)^{-1} - 1
  = - \frac{(b - a)(d - c)}{(d - b)(c - a)}
  = -R(a, d, b, c).
  \label{sup:eq:cross-ratio-minus-one}
\end{equation}
Applying~\eqref{sup:eq:cross-ratio-minus-one} to the numerator and denominator of~\eqref{eq:increment-rewrite} with $(a, b, c, d) = (\zeta, \alpha, \zeta', \alpha')$ and $(\theta_k^{-1}(\zeta), \theta_k^{-1}(\alpha), \zeta', \alpha')$, respectively, the factor $(\alpha' - \zeta')$ cancels and we obtain
\begin{align}
  \Delta_k(\psi; \zeta, \alpha)
   & =
  - \log \abs{\frac{\theta_k^{-1}(\zeta) - \theta_k^{-1}(\alpha)}{\zeta - \alpha}}
  \notag
  \\
   & \quad
  + \log \abs{\frac{\zeta' - \theta_k^{-1}(\zeta)}{\zeta' - \zeta}}
  + \log \abs{\frac{\alpha' - \theta_k^{-1}(\alpha)}{\alpha' - \alpha}}.
  \label{sup:eq:three-logs}
\end{align}

\subsection{Separation using the fixed point}
\label{sup:sec:step2}

The first term in~\eqref{sup:eq:three-logs} couples $\zeta$ and $\alpha$ together.
We split it using the fixed point $z_k^{\fix}$ from~\eqref{eq:Rz-def}.

Write $\theta_k^{-1}(z) = (A'z + B')/(C'z + D')$.
For any M\"obius map $G(z) = (Az + B)/(Cz + D)$, a direct expansion gives
\begin{equation}
  \frac{G(u) - G(v)}{u - v} = \frac{AD - BC}{(Cu + D)(Cv + D)}
  \qquad\text{for all }u, v,
  \label{sup:eq:mobius-difference}
\end{equation}
and $G'(z) = (AD - BC)/(Cz + D)^2$.
Setting $G = \theta_k^{-1}$ and $u = z_k^{\fix}$ in~\eqref{sup:eq:mobius-difference} and rearranging yields
\begin{equation}
  \frac{1}{C'v + D'}
  =\frac{1}{(\theta_k^{-1})'\ab(z_k^{\fix})}\cdot\frac{z_k^{\fix} - \theta_k^{-1}(v)}{(z_k^{\fix} - v)(C'z_k^{\fix} + D')}.
  \label{sup:eq:fixed-point-form}
\end{equation}
Applying~\eqref{sup:eq:mobius-difference} with $G = \theta_k^{-1}$ and $(u, v) = (\zeta, \alpha)$, and substituting~\eqref{sup:eq:fixed-point-form} for $v = \zeta$ and for $v = \alpha$, gives
\begin{align*}
  \frac{\theta_k^{-1}(\zeta) - \theta_k^{-1}(\alpha)}{\zeta - \alpha}
   & = \frac{A'D' - B'C'}{(C'\zeta + D')(C'\alpha + D')},
  \\
  \frac{1}{C'\zeta + D'}
   & = \frac{1}{(\theta_k^{-1})'\ab(z_k^{\fix})}
  \cdot \frac{z_k^{\fix} - \theta_k^{-1}(\zeta)}{(z_k^{\fix} - \zeta)(C'z_k^{\fix} + D')},
  \\
  \frac{1}{C'\alpha + D'}
   & = \frac{1}{(\theta_k^{-1})'\ab(z_k^{\fix})}
  \cdot \frac{z_k^{\fix} - \theta_k^{-1}(\alpha)}{(z_k^{\fix} - \alpha)(C'z_k^{\fix} + D')}.
\end{align*}
Multiplying the three identities and using $(\theta_k^{-1})'(z_k^{\fix}) = (A'D' - B'C')/(C'z_k^{\fix} + D')^2$, we obtain
\begin{align*}
  \frac{\theta_k^{-1}(\zeta) - \theta_k^{-1}(\alpha)}{\zeta - \alpha}
   & = \frac{1}{(\theta_k^{-1})'\ab(z_k^{\fix})}
  \frac{z_k^{\fix} - \theta_k^{-1}(\zeta)}{z_k^{\fix} - \zeta}
  \frac{z_k^{\fix} - \theta_k^{-1}(\alpha)}{z_k^{\fix} - \alpha},
  \\
   & = \theta_k'\ab(z_k^{\fix})
  \frac{z_k^{\fix} - \theta_k^{-1}(\zeta)}{z_k^{\fix} - \zeta}
  \frac{z_k^{\fix} - \theta_k^{-1}(\alpha)}{z_k^{\fix} - \alpha},
\end{align*}
since $(\theta_k^{-1})'(z_k^{\fix}) = 1/\theta_k'(z_k^{\fix})$.
Hence
\begin{align*}
  \Delta_k(\psi; \zeta, \alpha)
   & =
  - \log \abs{\theta_k'\ab(z_k^{\fix})}
  - \log \abs{\frac{z_k^{\fix} - \theta_k^{-1}(\zeta)}{z_k^{\fix} - \zeta}}
  - \log \abs{\frac{z_k^{\fix} - \theta_k^{-1}(\alpha)}{z_k^{\fix} - \alpha}}
  \\
   & \quad
  + \log \abs{\frac{\zeta' - \theta_k^{-1}(\zeta)}{\zeta' - \zeta}}
  + \log \abs{\frac{\alpha' - \theta_k^{-1}(\alpha)}{\alpha' - \alpha}}.
\end{align*}
Grouping the $\zeta$- and $\alpha$-dependent terms and applying~\eqref{eq:R-cross-ratio} with~\eqref{eq:Rz-def} gives
\begin{align*}
  \log \abs{\frac{\zeta' - \theta_k^{-1}(\zeta)}{\zeta' - \zeta}}
  - \log \abs{\frac{z_k^{\fix} - \theta_k^{-1}(\zeta)}{z_k^{\fix} - \zeta}}
   & = \log \abs{
    R\ab(\zeta', z_k^{\fix}, \theta_k^{-1}(\zeta), \zeta)
  }
  \\
   & = \log \abs{R_k(\zeta'; \zeta)},
\end{align*}
and the same identity with $(\zeta, \zeta')$ replaced by $(\alpha, \alpha')$ gives $\log \abs{R_k(\alpha'; \alpha)}$.
Therefore
\begin{align*}
  \Delta_k(\psi; \zeta, \alpha)
   & =
  - \log \abs{\theta_k'\ab(z_k^{\fix})}
  + \log \abs{R_k(\zeta'; \zeta)}
  + \log \abs{R_k(\alpha'; \alpha)},
\end{align*}
which is~\eqref{eq:increment-separated}.

% ============================================================
\section{Computation of \texorpdfstring{$\max / \min |R_k(w; \zeta)|$ on $C_j$}{max/min |R\_k(w;zeta)| on C\_j}}
\label{sup:sec:max-min-on-circle}

We derive $(\delta_{j, k}^{\zeta}, q_{j, k}^{\zeta})$ and the $(\cdot, \alpha)$ analogues by pole inversion on~$C_j$.

\subsection{M\"obius image of a circle by pole inversion}
\label{sup:sec:pole-inversion}

Let $G(z) = (az + b)/(cz + d)$ with $ad - bc \neq 0$, and let $p = -d/c$ be its pole.
For a circle $\Gamma = \{z \in \Complex : \abs{z - \delta} = q\}$ with $\delta \in \Complex$ and $q > 0$, assume $p \notin \Gamma$, i.e.\ $\abs{p - \delta} \neq q$.
Let $\xi$ denote the inverse of~$p$ in~$\Gamma$.
Identity~\eqref{sup:eq:mobius-difference} applies to~$G$.

\begin{lemma}[M\"obius image of a circle]
  \label{sup:lem:mobius-circle-image}
  Under the hypotheses above, $G(\Gamma) = \{z \in \Complex : \abs{z - \delta'} = q'\}$, where $\delta'$ and~$q'$ denote the center and radius of~$G(\Gamma)$ and
  \begin{equation}
    \xi
    = \delta + \frac{q^2}{\conj{p - \delta}},
    \qquad
    \delta' = G(\xi),
    \qquad
    q'
    = \frac{\abs{ad - bc} q}
    {\abs{c}^2 \abs{q^2 - \abs{p - \delta}^2}}.
    \label{sup:eq:pole-inversion}
  \end{equation}
\end{lemma}

\begin{proof}
  Since $p \notin \Gamma$, the map $G$ is holomorphic on~$\Gamma$ and sends it to a circle (not a line).
  By definition of circle inversion, $(\xi - \delta)\conj{(p - \delta)} = q^2$, so $p$, $\delta$, and~$\xi$ are collinear and
  \begin{equation}
    \abs{\xi - \delta} = \frac{q^2}{\abs{p - \delta}},
    \qquad
    \abs{\xi - p} = \frac{\abs{q^2 - \abs{p - \delta}^2}}{\abs{p - \delta}}.
    \label{sup:eq:inverse-point-distances}
  \end{equation}
  A standard M\"obius fact gives $\delta' = G(\xi)$ as the center of~$G(\Gamma)$.
  Pick any $z_0 \in \Gamma$; then $q' = \abs{G(z_0) - G(\xi)}$.
  By~\eqref{sup:eq:mobius-difference} with $G$ in place of~$\theta_k$,
  \begin{equation*}
    q'
    = \frac{\abs{ad - bc} \abs{z_0 - \xi}}
    {\abs{c z_0 + d} \abs{c \xi + d}}
    = \frac{\abs{ad - bc} \abs{z_0 - \xi}}
    {\abs{c}^2 \abs{z_0 - p} \abs{\xi - p}}.
  \end{equation*}
  On~$\Gamma$ the Apollonius ratio $\abs{z_0 - \xi}/\abs{z_0 - p} = q/\abs{p - \delta}$ is constant for $z_0 \in \Gamma$.
  Substituting the Apollonius ratio and the formula for $\abs{\xi - p}$ in~\eqref{sup:eq:inverse-point-distances} into the expression for~$q'$, and cancelling the factor $\abs{z_0 - p}$, yields~\eqref{sup:eq:pole-inversion}.
\end{proof}

\subsection{Center and radius of \texorpdfstring{$\Sigma_{j, k}^{\zeta}$}{Sigma\_j,k^zeta}}
\label{sup:sec:apply-to-Rz}

Fix $j, k \in \{\pm 1, \ldots, \pm M\}$ with $k \neq -j$.
For $R_k(w; \zeta)$ from~\eqref{eq:Rz-def}, write
\begin{equation}
  R_k(w; \zeta)
  = \kappa_{k}^{\zeta} \frac{w - \theta_k^{-1}(\zeta)}{w - \zeta},
  \qquad
  \kappa_{k}^{\zeta}
  \defeq
  \frac{z_k^{\fix} - \zeta}{z_k^{\fix} - \theta_k^{-1}(\zeta)},
  \label{sup:eq:R-w-zeta}
\end{equation}
by~\eqref{eq:R-cross-ratio}.
The prefactor $\kappa_{k}^{\zeta}$ is a constant determined by $\theta_k$ and $\zeta$ alone (through $z_k^{\fix}$ and $\theta_k^{-1}(\zeta)$ in~\eqref{eq:Rz-def}).
Writing $R_k(w; \zeta) = (aw + b)/(cw + d)$ gives
\begin{equation}
  a = \kappa_{k}^{\zeta}, \
  b = - \kappa_{k}^{\zeta} \theta_k^{-1}(\zeta), \
  c = 1, \
  d = - \zeta,
  \qquad
  ad - bc = \kappa_{k}^{\zeta}\ab(\theta_k^{-1}(\zeta) - \zeta).
  \label{sup:eq:R-coeffs-z}
\end{equation}

We apply \cref{sup:lem:mobius-circle-image} to $w \mapsto R_k(w; \zeta)$ on $C_j$, identifying the image center and radius with $\delta_{j, k}^{\zeta}$ and~$q_{j, k}^{\zeta}$ from \cref{sec:edge-weights-closed}.
The pole is $p = \zeta$; with $\Gamma = C_j$, the inversion $\xi$ from \cref{sup:lem:mobius-circle-image} is denoted
\begin{equation}
  \xi_{j}^{\zeta}
  \defeq \delta_j + \frac{q_j^2}{\conj{(\zeta - \delta_j)}}.
  \label{sup:eq:xi-j-zeta}
\end{equation}
The image circle $\Sigma_{j, k}^{\zeta}$ has center and radius
\begin{equation}
  \delta_{j, k}^{\zeta}
  = R_k(\xi_{j}^{\zeta}; \zeta)
  = \frac{a \xi_{j}^{\zeta} + b}{c \xi_{j}^{\zeta} + d},
  \qquad
  q_{j, k}^{\zeta}
  = \frac{\abs{ad - bc} q_j}
  {\abs{c}^2 \abs{q_j^2 - \abs{\zeta - \delta_j}^2}},
  \label{sup:eq:delta-q-zeta}
\end{equation}
with $a, b, c, d$ as in~\eqref{sup:eq:R-coeffs-z}.
Substituting~\eqref{sup:eq:xi-j-zeta} and~\eqref{sup:eq:R-coeffs-z} into~\eqref{sup:eq:delta-q-zeta} yields
\begin{subequations}
  \label{sup:eq:delta-q-zeta-full}
  \begin{align}
    \delta_{j, k}^{\zeta}
     & = \frac{\kappa_{k}^{\zeta}(\xi_{j}^{\zeta} - \theta_k^{-1}(\zeta))}{\xi_{j}^{\zeta} - \zeta}
    = \frac{
      \kappa_{k}^{\zeta} [\conj{(\zeta - \delta_j)}(\delta_j - \theta_k^{-1}(\zeta)) + q_j^2]
    }{
      q_j^2 - \abs{\zeta - \delta_j}^2
    },
    \\
    q_{j, k}^{\zeta}
     & = \frac{\abs{\kappa_{k}^{\zeta}(\theta_k^{-1}(\zeta) - \zeta)} q_j}{\abs{q_j^2 - \abs{\zeta - \delta_j}^2}}.
  \end{align}
\end{subequations}
Set $\eta_{j, k}^{\zeta} \defeq q_{j, k}^{\zeta}/\abs{\delta_{j, k}^{\zeta}}$.
Substituting~\eqref{sup:eq:delta-q-zeta-full} gives
\begin{equation}
  \eta_{j, k}^{\zeta}
  = \frac{\abs{\theta_k^{-1}(\zeta) - \zeta} q_j}{\abs{\conj{(\zeta - \delta_j)}(\delta_j - \theta_k^{-1}(\zeta)) + q_j^2}}.
  \label{sup:eq:eta-zeta}
\end{equation}
Since $\zeta, \theta_k^{-1}(\zeta) \notin D_j$, the points $0$ and $\infty$ lie in the exterior of the disk bounded by $\Sigma_{j, k}^{\zeta}$, so $\abs{\delta_{j, k}^{\zeta}} > q_{j, k}^{\zeta}$; hence $0 \leq \eta_{j, k}^{\zeta} < 1$.
The extrema in~\eqref{eq:max-min} therefore read $\max_{w \in C_j}\abs{R_k(w; \zeta)} = \abs{\delta_{j, k}^{\zeta}} \ab(1 + \eta_{j, k}^{\zeta})$ and $\min_{w \in C_j}\abs{R_k(w; \zeta)} = \abs{\delta_{j, k}^{\zeta}} \ab(1 - \eta_{j, k}^{\zeta})$.
Replacing $\zeta$ by $\alpha$ gives $(\delta_{j, k}^{\alpha}, q_{j, k}^{\alpha}, \eta_{j, k}^{\alpha})$.

% ============================================================
\section{Detailed proof of the asymptotic relative-error bound}
\label{sup:sec:error-proof}

We prove~\eqref{eq:err-as-tail} and~\eqref{eq:tail-sum} from the proof sketch of \cref{thm:error}.

\subsection{Reduction to the tail sum}
\label{sup:sec:reduction-to-tail}

Set $\epsilon_\theta \defeq R_{\zeta, \alpha}(\theta) - 1$ and
\begin{equation}
  s \defeq \sum_{\theta \in \Theta''_{> U_{\max}}}\abs{\epsilon_\theta},
  \qquad
  \Theta''_{> U_{\max}}
  \defeq \{\theta \in \Theta'': U_{\zeta, \alpha}(\theta) > U_{\max}\}.
\end{equation}
By~\eqref{eq:omega-as-perturbation} and~\eqref{eq:trunc-def},
\begin{equation}
  \frac{\abs{\omega_{U_{\max}} - \omega}}{\abs{\omega}}
  =\frac{\abs{\prod_{\theta \in \Theta''_{> U_{\max}}}(1 + \epsilon_\theta) - 1}}
  {\abs{\prod_{\theta \in \Theta''_{> U_{\max}}}(1 + \epsilon_\theta)}}.
  \label{sup:eq:err-quotient}
\end{equation}
For the numerator in~\eqref{sup:eq:err-quotient}, we expand $\prod_{\theta \in \Theta''_{> U_{\max}}}(1 + \epsilon_\theta) - 1$ as a multilinear sum and apply the triangle inequality to obtain $\abs{\prod(1 + \epsilon_\theta) - 1} \le \prod \ab(1 + \abs{\epsilon_\theta}) - 1$.
Since $1 + t \le e^{t}$ for $t \ge 0$, we have $\prod \ab(1 + \abs{\epsilon_\theta}) \le e^{s}$, hence
\begin{equation}
  \abs{\prod_{\theta \in \Theta''_{> U_{\max}}}(1 + \epsilon_\theta) - 1}
  \le e^{s} - 1.
  \label{sup:eq:tail-product-bound}
\end{equation}
Combining~\eqref{sup:eq:err-quotient} and~\eqref{sup:eq:tail-product-bound}, and using $\abs{\prod(1 + \epsilon_\theta)} \le e^{s} \to 1$ as $U_{\max} \to \infty$, we obtain
\begin{equation}
  \frac{\abs{\omega_{U_{\max}} - \omega}}{\abs{\omega}}
  \le \frac{e^{s} - 1}{\abs{\prod_{\theta \in \Theta''_{> U_{\max}}}(1 + \epsilon_\theta)}}
  =\ab(1 + o(1))s,
  \label{sup:eq:err-as-tail-detailed}
\end{equation}
as $s \to 0$, because $e^{s} - 1 = \ab(1 + o(1))s$.
Hence
\begin{equation*}
  \frac{\abs{\omega_{U_{\max}} - \omega}}{\abs{\omega}}
  \le
  \ab(1 + o(1))
  \sum_{U_{\zeta, \alpha}(\theta) > U_{\max}}\abs{\epsilon_\theta},
\end{equation*}
which is~\eqref{eq:err-as-tail}.

\subsection{Stieltjes integration by parts}
\label{sup:sec:stieltjes}

By~\eqref{eq:counting-U} and $\abs{\epsilon_\theta} = e^{-U_{\zeta, \alpha}(\theta)}$,
\begin{equation}
  s = \sum_{U_{\zeta, \alpha}(\theta) > U_{\max}}\abs{\epsilon_\theta}
  =\int_{U_{\max}}^{\infty} e^{-u} \dl{N}(u).
  \label{sup:eq:as-stieltjes}
\end{equation}
Since $N$ is a right-continuous step function with unit jumps at the values $U_{\zeta, \alpha}(\theta)$, integration by parts with the smooth factor $e^{-u}$ gives
\begin{subequations}
  \label{sup:eq:ibp}
  \begin{align}
    \int_{U_{\max}}^{\infty} e^{-u} \dl{N}(u)
     & = \ab[N(u) e^{-u}]_{U_{\max}}^{\infty}
    +\int_{U_{\max}}^{\infty}N(u) e^{-u} \dl{u}
    \\
     & =-N(U_{\max}) e^{-U_{\max}}
    +\int_{U_{\max}}^{\infty}N(u) e^{-u} \dl{u},
  \end{align}
\end{subequations}
where the boundary term at $u = \infty$ vanishes since $N(u) e^{-u} \to 0$ ($\delta < 1$).
Substituting $N(u) \sim C e^{\delta u}$ and integrating,
\begin{subequations}
  \label{sup:eq:ibp-asymptotics}
  \begin{align}
    \int_{U_{\max}}^{\infty}N(u) e^{-u} \dl{u}
     & \sim C\int_{U_{\max}}^{\infty} e^{-(1 - \delta)u} \dl{u}
    =\frac{C}{1 - \delta} e^{-(1 - \delta)U_{\max}},
    \\
    N(U_{\max}) e^{-U_{\max}}
     & \sim C e^{-(1 - \delta)U_{\max}}.
  \end{align}
\end{subequations}
Combining the two asymptotics in~\eqref{sup:eq:ibp-asymptotics} gives
\begin{equation}
  s
  \sim \frac{C\delta}{1 - \delta} e^{-(1 - \delta)U_{\max}},
  \label{sup:eq:tail-sum-final}
\end{equation}
which is~\eqref{eq:tail-sum}.
Combining~\eqref{sup:eq:err-as-tail-detailed} with~\eqref{sup:eq:tail-sum-final} yields
\begin{equation*}
  \frac{\abs{\omega_{U_{\max}}(\zeta, \alpha) -
      \omega(\zeta, \alpha)}}
  {\abs{\omega(\zeta, \alpha)}}
  \le
  \ab(1 + o(1))
  \frac{C\delta}{1 - \delta} e^{-(1 - \delta)U_{\max}},
  \qquad \text{as } U_{\max} \to \infty,
\end{equation*}
which is~\eqref{eq:error-bound}.

% \bibliographystyle{siamplain}
% \bibliography{paper_skprime}